\newtheorem{theorem}{Theorem}         
\newtheorem{proposition}{Proposition}
\newtheorem{cor}[proposition]{Corollary}
\newtheorem{lemma}[proposition]{Lemma}
\newtheorem{definition}[proposition]{Definition}
\theoremstyle{definition}
\newtheorem*{rmk*}{Remark}
\newcommand{\XX}{\mathcal X}
\newcommand{\YY}{\mathcal Y}
\newcommand{\ZZ}{\mathcal Z}
\newcommand{\N}{\mathbb N}
\newcommand{\Q}{\mathbb Q}
\newcommand{\Z}{\mathbb Z}
\newcommand{\C}{\mathbb C}
\newcommand{\DD}{\mathcal D}
\newcommand{\wT}{\widetilde{T}}
\title[Minkowski type question mark functions]{On Minkowski type question mark functions associated with even or odd continued fractions}
\author{Florin P. Boca}
\author{Christopher Linden}
\address{Department of Mathematics, University of Illinois at Urbana-Champaign,
Urbana, IL 61801}
\address{E-mail: fboca@math.uiuc.edu \quad clinden2@illinois.edu}
\date{\today}
\begin{document}

\begin{abstract}
We study analogues of Minkowski's question mark function $?(x)$ related to continued fractions with even or with odd partial quotients.
We prove that these functions are H\"older continuous with precise exponents, and that they linearize the appropriate versions of the Gauss and Farey maps.
\end{abstract}

\maketitle

\section{Introduction}
Minkowski \cite{Min} introduced a homeomorphism of $[0,1]$, which he denoted $?(x)$, that gives monotonic bijections between rational
and dyadic numbers in $[0,1]$, and also between quadratic irrationals in $(0,1)$ and rationals in $(0,1)$.
The function $?$ is singular, yet strictly increasing, continuous, and surjective.
The question mark can be defined inductively on rationals by
\begin{equation*}
?\bigg( \frac{p+p^\prime}{q+q^\prime} \bigg) =\frac{1}{2}
  ? \bigg( \frac{p}{q}\bigg)+\frac{1}{2} ? \bigg(
  \frac{p^\prime}{q^\prime}\bigg),
  \end{equation*}
whenever $\frac{p}{q}$ and $\frac{p^\prime}{q^\prime}$ are rational numbers in lowest terms in $[0,1]$
with $p^\prime q-pq^\prime =1$.
In terms of the regular continued fraction expansion
\begin{equation*}
x=[a_1,a_2,\ldots]=\cfrac{1}{a_1+\frac{1}{a_2+\frac{1}{\ddots}}} ,
\end{equation*}
the values of $?(x)$ can be explicitly expressed by Denjoy's formula \cite{Den}
(see also \cite{Sal} and \cite{KMS}) as
\begin{equation*}
? ([a_1,a_2,a_3,\ldots ]) =\frac{1}{2^{a_1-1}} - \frac{1}{2^{a_1+a_2-1}}
 +\frac{1}{2^{a_1+a_2+a_3-1}}   -  \cdots   .
\end{equation*}

It is well-known (see, e.g., \cite{BI}) that $?(x)$ linearizes the
classical Gauss and Farey maps
\begin{equation*}
G \big( [a_1,a_2,a_3,\ldots]\big) =[a_2,a_3,a_4,\ldots ],\quad
F \big( [a_1,a_2,a_3,\ldots ]\big) =\begin{cases}
[a_1 -1,a_2,a_3,\ldots ] & \mbox{\rm if $a_1 \geq 2$} \\
[a_2,a_3,a_4,\ldots] & \mbox{\rm if $a_1=1$,} \end{cases}
\end{equation*}
associated with regular continued fractions, or equivalently
\begin{equation*}
G(x)=\left\{ \frac{1}{x}\right\} =\frac{1}{x}
-\left[ \frac{1}{x}\right],\quad
F(x)=\begin{cases}
\frac{x}{1-x} & \mbox{\rm if $x\in \left[ 0,\frac{1}{2}\right]$} \\
\frac{1-x}{x} & \mbox{\rm if $x\in \left[ \frac{1}{2},1\right] .$} \end{cases}
\end{equation*}
More precisely, the map $? G?^{-1}$ is decreasing and is linear on each interval
$(2^{-k-1},2^{-k})$, while $(? F?^{-1})(x)=2\mbox{\rm dist} (x,\Z)$.

Salem \cite{Sal} proved that $?(x)$ is singular and H\"older continuous, with best exponent $\frac{\log 2}{2\log G}\approx 0.72021$, where
$G=\frac{1}{2} (1+\sqrt{5})$ denotes the ``big" golden ratio.
Several significant results about $?(x)$ have subsequently been proved \cite{Kin,PVB,Alk,DKM,DVA}, culminating with the
very recent solution provided by Jordan and Sahlsten \cite{JS}
to the longstanding Salem open problem \cite{Sal} concerning the decay of its Fourier-Stieltjes coefficients.
A number of generalizations of this classical map have been considered
\cite{GKT,BG,Pan,Zha,Nor}.
See \url{http://uosis.mif.vu.lt/~alkauskas/minkowski.htm} for an extensive bibliography
of research in this area until 2014.

Any continued fraction algorithm on $[0,1]$ generates a natural filtration $\{ \YY_n\}$ of
$\Q \cap [0,1]$, obtained by taking into account the sum of the partial quotients of
the rationals. One can consider the simple non-decreasing functions
\begin{equation*}
Q_n :[0,1] \rightarrow [0,1],\quad
Q_n (x):= \frac{\lvert \{ y\in \YY_n : y<x \} \vert}{\lvert \YY_n \rvert -1} .
\end{equation*}
For the regular continued fraction, $\YY_n$ is the set of rationals with sum of partial
quotients at most $n$. The limit $Q(x):=\lim_n Q_n (x)$ provides an analogue of the Minkowski question mark function.

This paper is concerned with the study of the resulting maps $Q_E$ and $Q_O$ in the situation of
continued fractions with even or with odd partial quotients,
that we are simply going to call \emph{even continued fractions} (in short ECF), and respectively \emph{odd continued
fractions} (in short OCF). See \cite{Sch1,Sch2} for the definition and basic properties of these two classes of continued fractions and \cite{Rie} for a
detailed treatment of odd continued fractions. The set $\YY_n$ is defined in \eqref{eq2} and \eqref{eq3} for the ECF situation,
and in \eqref{eq7} for the OCF situation. The map $Q_O$ has been previously introduced and investigated
by Zhabitskaya \cite{Zha}.

In Section \ref{ECF} we consider the situation of even continued fractions, defining our even question mark function $Q_E$ and proving a formula for $Q_E(x)$ in terms of
the ECF expansion of $x$. As a consequence, we prove that $Q_E$ is singular and H\"older continuous with best exponent
$\frac{\log 3}{2 \log (1+ \sqrt{2})}\approx 0.62324$. We also show that $Q_E$ linearizes the even Gauss and even Farey maps. As the formula in Theorem \ref{Thm1} makes clear,
$Q_E$ is naturally a triadic version of Minkowski's $?(x)$ function. Northshield has introduced \cite{Nor} a different triadic generalization of the
question mark function. At the end of the section, we establish a precise connection between our even continued fraction analogue of the Stern sequence and the
sequence in $\mathbb{Z}[\sqrt{2}]$ that he considers.

In Section \ref{OCF} we focus on odd continued fractions, following Zhabitskaya's work \cite{Zha}
and considering the odd question mark function $Q_O(x)$ that coincides with her $F^0(x)$.
We prove that the function $Q_O$ is H\"older continuous with best exponent $\frac{\log\lambda}{2\log G}\approx 0.63317$,
where $\lambda\approx 1.83929$ denotes the unique real root of the equation $x^3-x^2-x-1=0$. We also prove that the map $Q_O$ linearizes the
odd Gauss and the odd Farey maps.

\begin{center}
\begin{figure}
\includegraphics[scale=0.7,bb = 0 0 280 180]{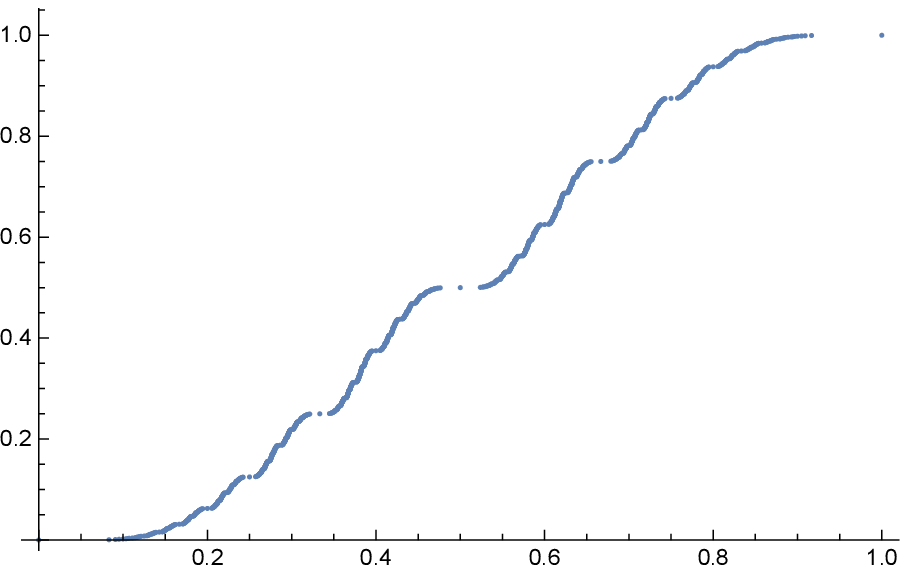} \\
\includegraphics[scale=0.7,bb = 0 0 280 180]{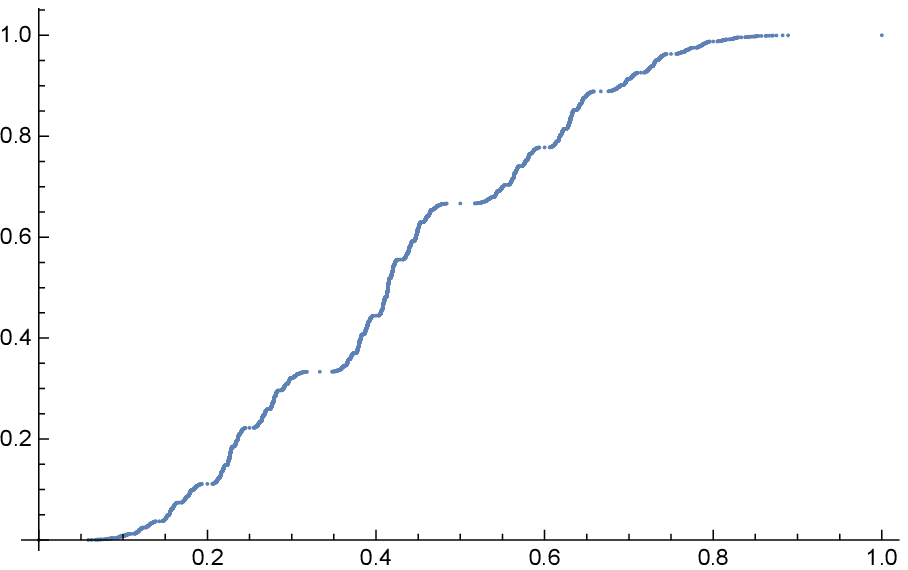}
\includegraphics[scale=0.7,bb = 0 0 280 180]{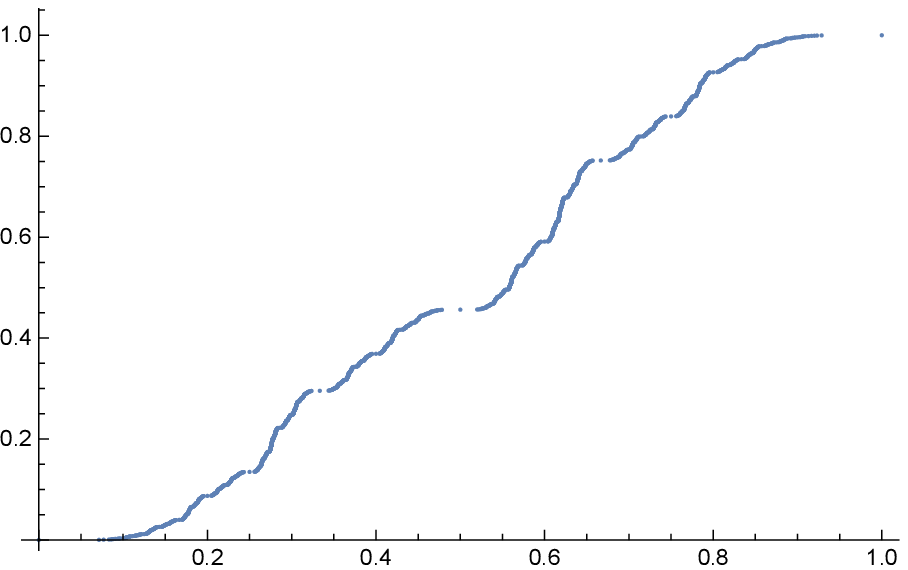}
\caption{The graphs of the functions $?(x)$, $Q_E(x)$ and $Q_O(x)$}\label{Figure1}
\end{figure}
\end{center}

\section{Even Partial Quotients}\label{ECF}
\subsection{Even continued fraction and associated Gauss and Farey maps}
We consider the ECF expansion in $[-1,1]$ given by
\begin{equation}\label{eq1}
[(e_1, a_1), (e_2, a_2), (e_3,a_3), \ldots ] = \cfrac{e_1}{a_1+\cfrac{e_2}{a_2+\frac{e_3}{\ddots}}},
\end{equation}
where $e_i \in \{\pm1\}$ and $a_i \in 2\mathbb{N}$.
Although most of the time we shall only consider positive numbers, that is $e_1=1$,
it will be sometimes convenient to consider the full range $\{ \pm 1\}$ for $e_1$, especially
when working with the function $Q_E$ or in Subsection 2.5.

For uniqueness, we shall require that in a finite expansion,
the last $e_j$ must equal $1$, and in this case we allow $a_j$ to also equal $1$.
This convention allows all
rational numbers to have a unique finite even continued fraction expansion.
For example, we have $\frac{1}{2k}=[(1,2k)]$, $\frac{1}{2k+1}= [(1,2k),(1,1)]$,
$\frac{3}{8}=[(1,2),(1,2),(-1,2)]$, $\frac{5}{13}=[(1,2),(1,2),(-1,2),(1,1)]$.
Note that $\frac{p}{q}$ will also have
a (unique) infinite expansion if and only if $p + q \equiv 0 \pmod 2$
and if and only if its finite expansion terminates in a $1$.

The corresponding Farey type map $F_E :[0,1]\rightarrow [0,1]$
defined by
\begin{equation*}
F_E(x)=\begin{cases}  \frac{x}{1-2x} & \mbox{\rm if $0\leq x\leq \frac{1}{3}$} \\
 \frac{1}{x}-2 & \mbox{\rm if $\frac{1}{3} \leq x \leq \frac{1}{2}$} \\
 2-\frac{1}{x} & \mbox{\rm if $\frac{1}{2} \leq x \leq 1$,}
\end{cases}
\end{equation*}
with infinite invariant measure $d\nu_E (x)=\frac{dx}{x(1-x)}$, has been already considered in different contexts in
\cite{AaD} and \cite{Rom}.
Symbolically, $F_E$ acts on the ECF representation by subtracting $2$ from the leading digit $a_1$ of $x$ when
$a_1 \geq 4$ (which corresponds to $x$ between $0$ and $\frac{1}{3}$), and by simply removing $(a_1,e_2)$ when $a_1=2$
(which corresponds to $x$ between $\frac{1}{3}$ and $1$), i.e.
\begin{equation*}
F_E ([(1,a_1),(e_2,a_2),(e_3,a_3),\ldots]) =\begin{cases}
[(1,a_1-2),(e_2,a_2),(e_3,a_3),\ldots ] & \mbox{\rm if $a_1 \geq 4$} \\
[(1,a_2),(e_3,a_3),(e_4,a_4),\ldots ] & \mbox{\rm if $a_1=2$.}
\end{cases}
\end{equation*}

\begin{center}
\begin{figure}
\includegraphics[scale=0.65,bb = 0 20 300 260]{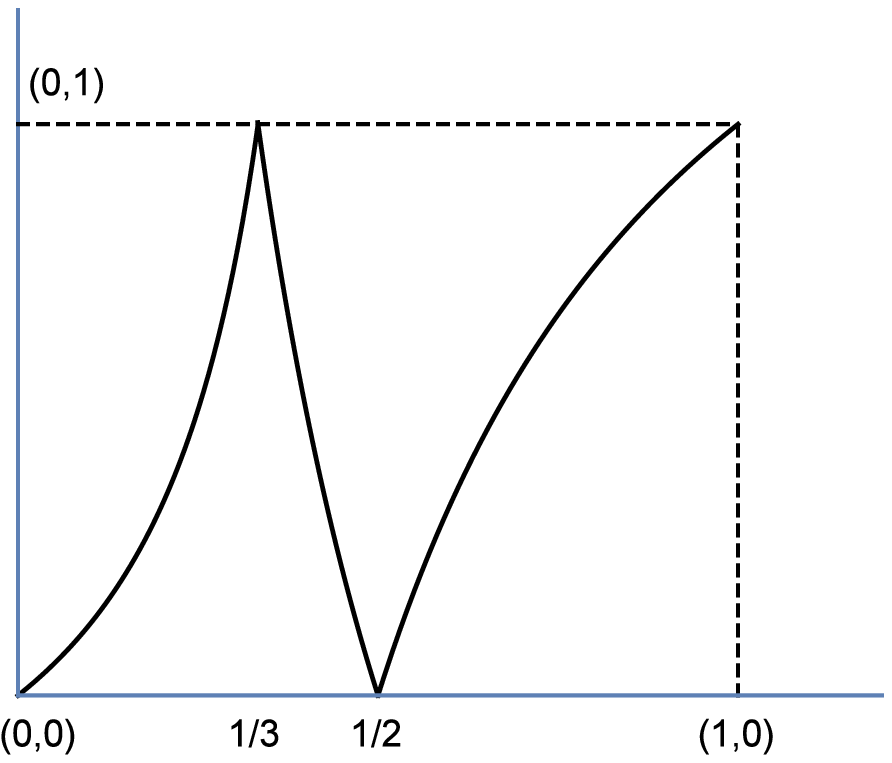}
\includegraphics[scale=0.65,bb = 0 20 200 260]{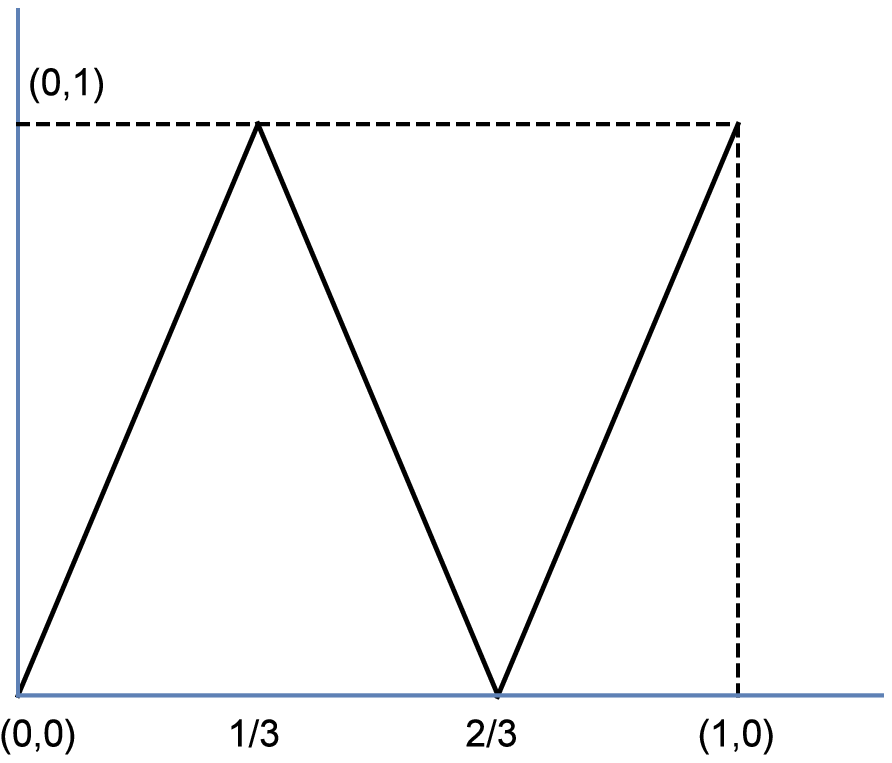}
\caption{The even Farey map $F_E$ and its linearization $\overline{F}_E$}\label{Figure2}
\end{figure}
\end{center}

We shall be interested in the sets $\YY_n$, $\ZZ_n$, and $\XX_n$ defined by
\begin{equation}\label{eq2}
\YY_n =F_E^{-n} (\{ 0,1\}), \quad
\ZZ_n =F_E^{-n} (\{ 0\})\quad  \mbox{\rm and} \quad
\XX_n := \YY_n \setminus \YY_{n-1},
\end{equation}
of cardinality $X_n:=\lvert \XX_n\rvert$, $Y_n:=\lvert \YY_n \rvert$ and $Z_n =\lvert \ZZ_n \rvert$.
Our convention is to take $0,1\in \YY_n$ and $0\in \ZZ_n$.
It is plain to check that $\ZZ_0 =\{ \frac{0}{1} \}$ and
\begin{equation*}
\begin{split}
\ZZ_1 & =\bigg\{ \frac{0}{1},\frac{1}{2}\bigg\} ,\quad
\ZZ_2 =\bigg\{ \frac{0}{1},\frac{1}{4},\frac{2}{5},\frac{1}{2},\frac{2}{3}\bigg\} ,\quad
\ZZ_3 =\bigg\{ \frac{0}{1},\frac{1}{6},\frac{2}{9},\frac{1}{4}, \frac{2}{7},\frac{3}{8}, \frac{2}{5},
\frac{5}{12},\frac{4}{9},\frac{1}{2}, \frac{4}{7}, \frac{5}{8}, \frac{2}{3}, \frac{3}{4} \bigg\} , \\
\YY_0 & =\bigg\{ \frac{0}{1},\frac{1}{1}\Big\}, \quad \YY_1 =\Big\{ \frac{0}{1},\frac{1}{3},\frac{1}{2},\frac{1}{1}\bigg\} ,
\quad \YY_2 =\bigg\{ \frac{0}{1},\frac{1}{5},\frac{1}{4},\frac{1}{3},\frac{2}{5},\frac{3}{7},
\frac{1}{2},\frac{3}{5},\frac{2}{3},\frac{1}{1}\bigg\}.
\end{split}
\end{equation*}

The first return map $R_E$ of $F_E$ on $(\frac{1}{3},1]$ acts on the ECF expansion as
\begin{equation*}
R_E ( [(1,2),(e_2,a_2),(e_3,a_3),\ldots ]) =[(1,2),(e_3,a_3),(e_4,a_4),\ldots] .
\end{equation*}
Recall that the even Gauss map $T_E$ acts on $[0,1]$ by $T_E (0)=0$ and
\begin{equation*}
T_E(x)= \bigg| \frac{1}{x}-2\bigg[ \frac{1}{2x}+\frac{1}{2}\bigg]\bigg|
= \bigg| \frac{1}{x}-2k \bigg| \quad \mbox{\rm if $\displaystyle x\in \bigg[ \frac{1}{2k+1},\frac{1}{2k-1}\bigg],$}
\end{equation*}
and it acts on ECF expansions \eqref{eq1} restricted to $(0,1)$ by
\begin{equation*}
T_E ( [(1,a_1),(e_2,a_2),(e_3,a_3),\ldots ]) =
[(1,a_2),(e_3,a_3),(e_4,a_4),\ldots ].
\end{equation*}
Furthermore, $d\mu_E (x)=(\frac{1}{1+x} + \frac{1}{1-x})dx$ is a $T_E$-invariant measure \cite{Sch1}.
Consider also the extended ECF Gauss map $\wT_E :[-1,1) \rightarrow [-1,1)$, acting on
the ECF expansion \eqref{eq1} as
\begin{equation*}
\wT_E ( [(e_1,a_1),(e_2,a_2),(e_3,a_3),\ldots ]) =[(e_2,a_2),(e_3,a_3),(e_4,a_4),\ldots ].
\end{equation*}
Equivalently, we can take $\wT_E (0)=0$ and
\begin{equation*}
\wT_E (x)= \frac{1}{\lvert x\rvert} -2\bigg[ \frac{1}{2\lvert x\rvert}+\frac{1}{2}\bigg] \quad \mbox{\rm if $x\neq 0$.}
\end{equation*}
The push-forward measure $d\widetilde{\mu}_E (x) =\frac{dx}{1+x}$ of $\nu_E\vert_{(1/3,1]}$ under $\varphi$ is $\wT_E$-invariant,
where $\varphi:(\frac{1}{3},1]\rightarrow [-1,1)$,
$\varphi (x)=\frac{1}{x}-2$ with $\varphi^{-1} (y)=\frac{1}{2+y}$.
It is plain that $R_E$ and $\wT_E$ are conjugated, and more precisely $\wT_E=\varphi R_E \varphi^{-1}$.
It is also plain that $\wT_E$ is an extension of $T_E$.
More precisely we have $\pi \wT_E=T_E \pi$, where
$\pi (x)=\lvert x\rvert$. The push forward of $\widetilde{\mu}_E$ under $\pi$ is
the $T_E$-invariant measure $\mu_E$.

\begin{center}
\begin{figure}
\includegraphics[scale=0.65,bb = 0 20 300 280]{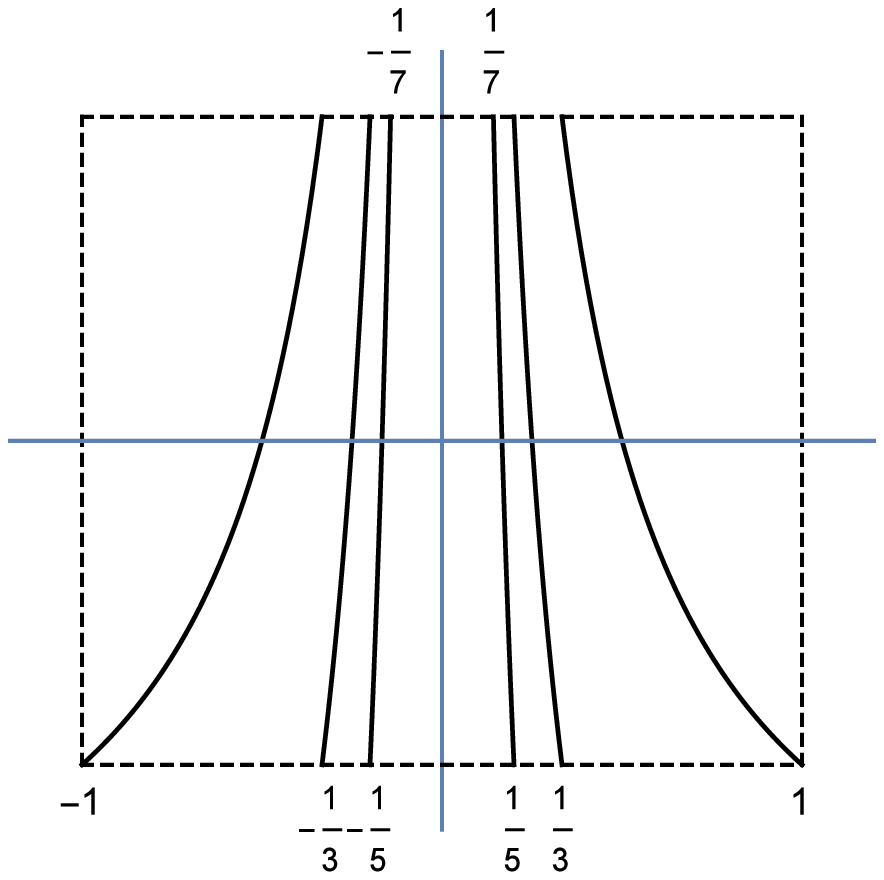}
\includegraphics[scale=0.65,bb = 0 20 200 280]{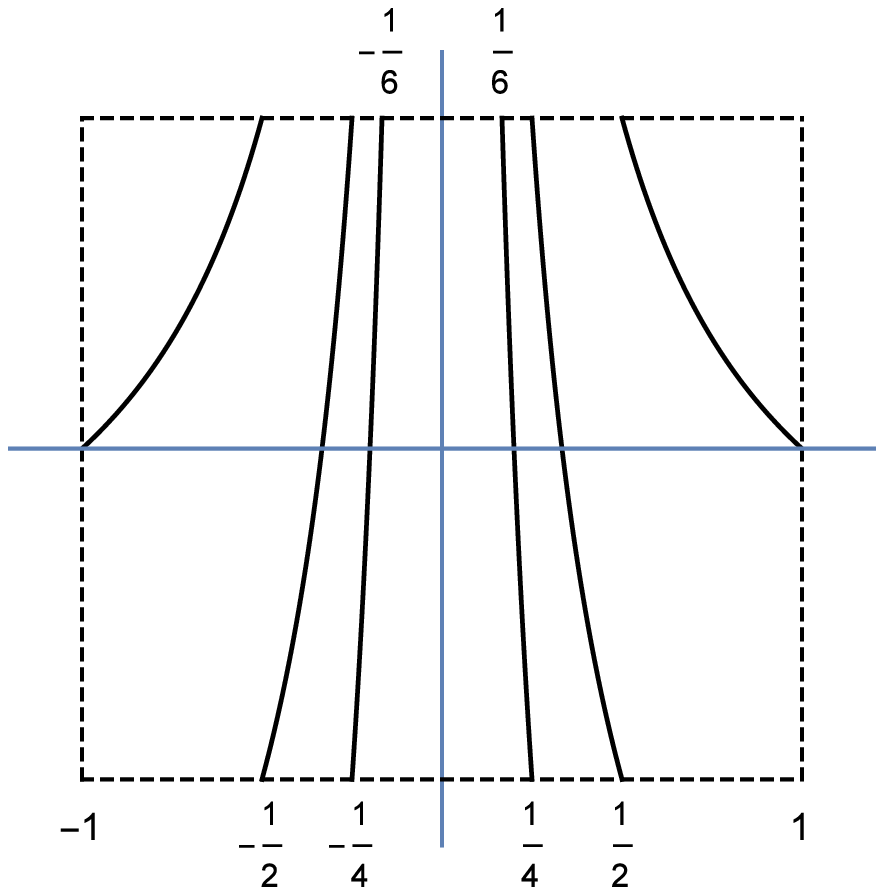}
\caption{The extended Gauss maps $\wT_E$ and $\wT_O$}\label{Figure3}
\end{figure}
\end{center}

\subsection{Ordering of rational numbers associated with the even continued fraction}
If $x = [(1, a_1), (e_2, a_2), \ldots ,(e_n, a_n)] \in \mathcal{Z}_k$, then let $[(x), (\epsilon_1, \alpha_1), (\epsilon_2, \alpha_2), \ldots ]$
denote the concatenated expansion  $[(1, a_1), (e_2, a_2), \ldots ,(e_n, a_n), (\epsilon_1, \alpha_1), (\epsilon_2, \alpha_2), \ldots ]$.
Observe that the sets $\YY_k$ and $\ZZ_k$ defined in \eqref{eq2} can also be described as
\begin{equation}\label{eq3}
\begin{split}
\YY_k = & \big\{ x= [(1, a_1), (e_2, a_2), \ldots ,(e_n, a_n)]\in \Q \cap [0,1]: a_1+\cdots+a_n \leq 2k+1 \big\} , \\
\ZZ_k = &  \{ x \in \YY_k: \forall i, a_i \neq 1 \}.
\end{split}
\end{equation}
Observe also that $\YY_k = \bigcup_{x \in \ZZ_k} \{ x, [(x), (1,1)]\}$, and hence $Y_k = 2Z_k$. For $x \in \ZZ_k$,
consider $c_k(x) := 2k+2 - \sum a_i$. Then
$$\ZZ_{k+1} = \left(\bigcup_{x \in \ZZ_k, x \neq 0} \{ x, [(x), (1, c_k(x))], [(x), (-1, c_k(x))] \} \right) \cup \{ 0, [(1,c_k (0))]\}.$$
Hence $Z_{k+1} = 3Z_k-1$. Since $Z_0 =1$, we have $Z_k = \frac{3^k+1}{2}$ and we conclude that $Y_k = 3^k +1$.

\begin{figure}
\begin{center}
\unitlength 0.5mm
\begin{picture}(0,100)(0,33)

\path(-120,40)(-120,120)(120,100)(120,40)
\path(-120,100.5)(-40,80.5) \path(-120,99.5)(-40,79.5) \path(-40,80)(120,100)
\put(-40,80){\makebox(0,0){{\tiny $\bullet$}}}

\path(-120,100)(40,80)(120,100)
\put(40,80){\makebox(0,0){{\tiny $\bullet$}}}

\path(-40,80)(-40,40) \path(40,80)(40,40)
\put(40,60){\makebox(0,0){{\tiny $\bullet$}}}
\put(40,40){\makebox(0,0){{\tiny $\bullet$}}}
\put(-40,60){\makebox(0,0){{\tiny $\bullet$}}}
\put(-40,40){\makebox(0,0){{\tiny $\bullet$}}}

\put(-93.7,60){\makebox(0,0){{\tiny $\bullet$}}}
\put(-93.7,40){\makebox(0,0){{\tiny $\bullet$}}}
\put(-66.6,60){\makebox(0,0){{\tiny $\bullet$}}}
\put(-66.6,40){\makebox(0,0){{\tiny $\bullet$}}}

\put(93.7,60){\makebox(0,0){{\tiny $\bullet$}}}
\put(93.7,40){\makebox(0,0){{\tiny $\bullet$}}}
\put(66.6,60){\makebox(0,0){{\tiny $\bullet$}}}
\put(66.6,40){\makebox(0,0){{\tiny $\bullet$}}}

\put(-120,100){\makebox(0,0){{\tiny $\bullet$}}}
\put(-120,80){\makebox(0,0){{\tiny $\bullet$}}}
\put(-120,60){\makebox(0,0){{\tiny $\bullet$}}}
\put(-120,40){\makebox(0,0){{\tiny $\bullet$}}}
\put(120,100){\makebox(0,0){{\tiny $\bullet$}}}
\put(120,80){\makebox(0,0){{\tiny $\bullet$}}}
\put(120,60){\makebox(0,0){{\tiny $\bullet$}}}
\put(120,40){\makebox(0,0){{\tiny $\bullet$}}}

\path(-120,80.5)(-93.7,60.5) \path(-120,79.5)(-93.7,59.5) \path(-93.7,60)(-40,80)
\path(-120,80)(-66.6,60)(-40,80)

\put(-13.4,60){\makebox(0,0){{\tiny $\bullet$}}}
\put(-13.4,40){\makebox(0,0){{\tiny $\bullet$}}}
\put(13.4,60){\makebox(0,0){{\tiny $\bullet$}}}
\put(13.4,40){\makebox(0,0){{\tiny $\bullet$}}}

\path(-40,80)(-13.4,60)(40,80)(93.7,60)(120,80)
\path(13.4,60.5)(40,80.5) \path(13.4,59.5)(40,79.5) \path(-40,80)(13.4,60)
\path(120,80)(66.6,60) \path(66.6,60.5)(40,80.5) \path(66.6,59.5)(40,79.5)

\put(111.14,40){\makebox(0,0){{\tiny $\bullet$}}}
\put(102.56,40){\makebox(0,0){{\tiny $\bullet$}}}
\put(84.84,40){\makebox(0,0){{\tiny $\bullet$}}}
\put(75.46,40){\makebox(0,0){{\tiny $\bullet$}}}
\put(57.74,40){\makebox(0,0){{\tiny $\bullet$}}}
\put(48.86,40){\makebox(0,0){{\tiny $\bullet$}}}
\put(31.14,40){\makebox(0,0){{\tiny $\bullet$}}}
\put(22.26,40){\makebox(0,0){{\tiny $\bullet$}}}
\put(4.54,40){\makebox(0,0){{\tiny $\bullet$}}}
\put(-111.14,40){\makebox(0,0){{\tiny $\bullet$}}}
\put(-102.56,40){\makebox(0,0){{\tiny $\bullet$}}}
\put(-84.84,40){\makebox(0,0){{\tiny $\bullet$}}}
\put(-75.46,40){\makebox(0,0){{\tiny $\bullet$}}}
\put(-57.74,40){\makebox(0,0){{\tiny $\bullet$}}}
\put(-48.86,40){\makebox(0,0){{\tiny $\bullet$}}}
\put(-31.14,40){\makebox(0,0){{\tiny $\bullet$}}}
\put(-22.26,40){\makebox(0,0){{\tiny $\bullet$}}}
\put(-4.54,40){\makebox(0,0){{\tiny $\bullet$}}}

\path(93.7,40)(93.7,60) \path(66.6,40)(66.6,60)
\path(13.4,40)(13.4,60)
\path(-93.7,40)(-93.7,60) \path(-66.6,40)(-66.6,60)
\path(-13.4,40)(-13.4,60)

\path(-120,60)(-102.56,40)(-93.7,60)(-84.84,40)(-66.6,60)(-48.86,40)(-40,60)(-31.14,40)(-13.4,60)(4.54,40)(13.4,60)(22.26,40)(40,60)(57.74,40)(66.6,60)(75.46,40)(93.7,60)(111.14,40)(120,60)

\path(-120,61)(-111.14,41) \path(-120,59)(-111.14,39) \path(-111.14,40)(-93.7,60)(-84.84,40)
\path(-93.7,60)(-75.46,40) \path(-75.46,41)(-66.6,61)(-57.74,41) \path(-75.46,39)(-66.6,59)(-57.74,39)
\path(-57.74,40)(-40,60)(-22.26,40) \path(-22.26,41)(-13.4,61)(-4.54,41) \path(-22.26,39)(-13.4,59)(-4.54,39)
\path(-4.54,40)(13.4,60)(31.14,40) \path(31.14,41)(40,61)(48.84,41) \path(31.14,39)(40,59)(48.84,39)
\path(48.84,40)(66.6,60)(84.84,40) \path(84.84,41)(93.7,61)(102.56,41) \path(84.84,39)(93.7,59)(102.56,39)
\path(102.56,40)(120,60)

\put(-125,100){\makebox(0,0){{\small $\frac{0}{1}$}}}

\put(125,100){\makebox(0,0){{\small $\frac{1}{1}$}}}
\put(-125,80){\makebox(0,0){{\small $\frac{0}{1}$}}}
\put(125,80){\makebox(0,0){{\small $\frac{1}{1}$}}}
\put(-125,60){\makebox(0,0){{\small $\frac{0}{1}$}}}
\put(125,60){\makebox(0,0){{\small $\frac{1}{1}$}}}
\put(-125,60){\makebox(0,0){{\small $\frac{0}{1}$}}}

\put(-120,33){\makebox(0,0){{\small $\frac{0}{1}$}}}
\put(120,33){\makebox(0,0){{\small $\frac{1}{1}$}}}

\put(-40,87){\makebox(0,0){{\small $\frac{1}{3}$}}}
\put(40,87){\makebox(0,0){{\small $\frac{1}{2}$}}}
\put(-45,60){\makebox(0,0){{\small $\frac{1}{3}$}}}
\put(45,60){\makebox(0,0){{\small $\frac{1}{2}$}}}
\put(-40,33){\makebox(0,0){{\small $\frac{1}{3}$}}}
\put(40,33){\makebox(0,0){{\small $\frac{1}{2}$}}}
\put(-93.7,67){\makebox(0,0){{\small $\frac{1}{5}$}}}
\put(-66.6,67){\makebox(0,0){{\small $\frac{1}{4}$}}}
\put(-13.4,67){\makebox(0,0){{\small $\frac{2}{5}$}}}
\put(13.4,67){\makebox(0,0){{\small $\frac{3}{7}$}}}
\put(66.6,67){\makebox(0,0){{\small $\frac{3}{5}$}}}
\put(93.7,67){\makebox(0,0){{\small $\frac{2}{3}$}}}

\put(-111.14,33){\makebox(0,0){{\small $\frac{1}{7}$}}}
\put(-102.56,33){\makebox(0,0){{\small $\frac{1}{6}$}}}
\put(-93.7,33){\makebox(0,0){{\small $\frac{1}{5}$}}}
\put(-75.46,33){\makebox(0,0){{\small $\frac{3}{13}$}}}
\put(-84.84,33){\makebox(0,0){{\small $\frac{2}{9}$}}}
\put(-66.6,33){\makebox(0,0){{\small $\frac{1}{4}$}}}
\put(-57.74,33){\makebox(0,0){{\small $\frac{3}{11}$}}}
\put(-48.86,33){\makebox(0,0){{\small $\frac{2}{7}$}}}
\put(-40,33){\makebox(0,0){{\small $\frac{1}{3}$}}}
\put(-31.14,33){\makebox(0,0){{\small $\frac{3}{8}$}}}
\put(-22.26,33){\makebox(0,0){{\small $\frac{5}{13}$}}}
\put(-13.4,33){\makebox(0,0){{\small $\frac{2}{5}$}}}
\put(-4.54,33){\makebox(0,0){{\small $\frac{7}{17}$}}}
\put(4.54,33){\makebox(0,0){{\small $\frac{5}{12}$}}}
\put(13.4,33){\makebox(0,0){{\small $\frac{3}{7}$}}}
\put(31.14,33){\makebox(0,0){{\small $\frac{5}{11}$}}}
\put(22.26,33){\makebox(0,0){{\small $\frac{4}{9}$}}}
\put(48.86,33){\makebox(0,0){{\small $\frac{5}{9}$}}}
\put(57.74,33){\makebox(0,0){{\small $\frac{4}{7}$}}}
\put(66.6,33){\makebox(0,0){{\small $\frac{3}{5}$}}}
\put(84.84,33){\makebox(0,0){{\small $\frac{7}{11}$}}}
\put(75.46,33){\makebox(0,0){{\small $\frac{5}{8}$}}}
\put(93.7,33){\makebox(0,0){{\small $\frac{2}{3}$}}}
\put(102.56,33){\makebox(0,0){{\small $\frac{5}{7}$}}}
\put(111.14,33){\makebox(0,0){{\small $\frac{3}{4}$}}}

\put(-140,100){\makebox(0,0){{\small $\YY_0$}}}
\put(-140,80){\makebox(0,0){{\small $\YY_1$}}}
\put(-140,60){\makebox(0,0){{\small $\YY_2$}}}
\put(-140,40){\makebox(0,0){{\small $\YY_3$}}}

\end{picture}
\end{center}
\caption{The ECF array $\DD_E$} \label{Figure4}
\end{figure}

Note that if $x, y \in \ZZ_k$, and $x<y$, then $[(x), (e,a)] < [(y), (\epsilon, \alpha)]$ for $e, \epsilon \in \{-1,1\}$ and $a, \alpha \in \{1\}\cup 2\Z$.
Inductively, this holds for any two continued fractions with initial expansions equal to those of $x$ and $y$, respectively.

Hence we may obtain the ordered set $\YY_{k+1}$ from $\YY_k$ by replacing $0$ with $0$, $[(1, c_k(0)), (1,1)]$, $[(1, c_k(0))]$, and each of the nonzero elements
$x = [(e_1,a_1), (e_2, a_2), \ldots ,(e_n, a_n)] \in \ZZ_k$ with the following five elements:
\begin{equation*}
[(x), (1, c_k(x))],\  [(x), (1, c_k(x)), (1,1)],\  x,\  [(x), (-1, c_k(x)), (1,1)], \  [(x), (-1, c_k(x))],
\end{equation*}
which are in this order if $(-e_1)\cdots (-e_n) = -1$ and are in the reverse order if $(-e_1)\cdots (-e_n) = 1$.
By induction, we have that for any $x \in \ZZ_k$, the neighbors of $x$ in $\YY_k$ are $[(x), (1, c_{k-1}(x)), (1,1)]$ and $[(x), (-1, c_{k-1}(x)), (1,1)]$,
with the understanding that if $x \in \XX_k$ and $c_{k-1}(x) =0$, we have $[(x), (1, 0), (1,1)]=[(x), (1,1)]$ and $[(x), (-1, 0), (1,1)]= [(x), (-1,1)]$.
Combining the fundamental recurrence relations for convergents (see \cite{Kra} equation 1.8) with the definition of the mediant, we quickly obtain the identities  $$[(x), (\epsilon, c_k(x))] = x \oplus [(x), (\epsilon, c_{k-1}(x)), (1,1)]$$ and $$[(x), (\epsilon, c_k(x)), (1,1)]= x \oplus [(x), (\epsilon, c_k(x))],$$ where $\epsilon \in \{-1,1\}$ and $\oplus$ denotes the mediant.

To summarize, we can construct $\YY_{k+1}$ from $\YY_k$ by inserting between each pair of elements (say, $\frac{p}{q} \in \ZZ_k$ and $\frac{r}{s} \in \YY_k \setminus \ZZ_k$)
the successive mediants $\frac{p}{q} \oplus \frac{r}{s} = \frac{p+r}{q+s}$ and $\frac{p}{q} \oplus \frac{p+r}{q+s} = \frac{2p+r}{2q+s}$.
This ECF analogue $\DD_E$ of the classical Stern-Brocot array (also called the  Pascal triangle with memory), is illustrated in
Figure \ref{Figure4}. At every level $n$, the interval $[0,1]$ is partitioned into $3^n$ subintervals.
The appearance of $\frac{2p+r}{2q+s}$ is indicated by a double edge.

\subsection{The even Minkowski type question mark function $Q_E$}
We are now ready to define the ECF analogue of Minkowski's question mark function,
and prove an explicit formula for it in terms of the ECF expansion.
\begin{definition}
For $x \in \mathcal{Y}_k$, define $$Q_E(x) := \frac{|\{y \in \mathcal{Y}_k : y < x\}|}{3^k}.$$
\end{definition}

\begin{proposition}
$Q_E(x)$ does not depend on the choice of $k$, hence $Q_E$ is well-defined on $\mathbb{Q} \cap [0,1]$.
\end{proposition}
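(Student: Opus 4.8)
The plan is to use the nesting $\YY_k \subseteq \YY_{k+1}$ together with the combinatorial refinement structure of the array $\DD_E$ recorded above. First I would note that the sets are indeed nested: since $F_E(0)=0$ and $F_E(1)=1$, the pair $\{0,1\}$ is contained in $F_E^{-1}(\{0,1\})$, and applying $F_E^{-k}$ gives $\YY_k=F_E^{-k}(\{0,1\})\subseteq F_E^{-k-1}(\{0,1\})=\YY_{k+1}$. Consequently every rational $x\in\Q\cap[0,1]$ lies in $\YY_k$ for all sufficiently large $k$ (namely all $k$ with $2k+1$ at least the sum of the partial quotients of the finite ECF expansion of $x$, by \eqref{eq3}), and it suffices to prove that the two expressions $\frac{|\{y\in\YY_k:y<x\}|}{3^k}$ and $\frac{|\{y\in\YY_{k+1}:y<x\}|}{3^{k+1}}$ agree whenever $x\in\YY_k$; the general statement then follows by an immediate induction on the difference of levels.

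For the one-step comparison, write the ordered set $\YY_k$ as $0=y_0<y_1<\cdots<y_N=1$ with $N=Y_k-1=3^k$, and suppose $x=y_j$, so that $|\{y\in\YY_k:y<x\}|=j$. By the discussion preceding this subsection, $\YY_{k+1}$ is obtained from $\YY_k$ by inserting, strictly between each consecutive pair $y_i<y_{i+1}$, exactly the two successive mediants, which lie in the open interval $(y_i,y_{i+1})$. Hence the elements of $\YY_{k+1}$ strictly less than $x=y_j$ are precisely $y_0,\ldots,y_{j-1}$ together with the two new points inserted in each of the $j$ gaps $(y_0,y_1),\ldots,(y_{j-1},y_j)$ — these are all $<y_j$ since each lies below the right endpoint of its gap — while every new point inserted in a gap $(y_i,y_{i+1})$ with $i\geq j$ is $>y_j$. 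Therefore $|\{y\in\YY_{k+1}:y<x\}|=j+2j=3j$, and
$$\frac{|\{y\in\YY_{k+1}:y<x\}|}{3^{k+1}}=\frac{3j}{3^{k+1}}=\frac{j}{3^k}=\frac{|\{y\in\YY_k:y<x\}|}{3^k}.$$
The extreme cases $x=0$ (with $j=0$) and $x=1$ (with $j=3^k$) are covered by the same count.

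I do not expect a genuine obstacle here; the argument is essentially bookkeeping over the gaps of $\YY_k$. The one point requiring care — and the only place where real content enters — is the claim that passing from $\YY_k$ to $\YY_{k+1}$ inserts precisely two new points, lying strictly inside each gap between consecutive elements of $\YY_k$. This is exactly the combinatorial description of the array $\DD_E$ established in the previous subsection from the ordering properties of concatenated ECF expansions and the mediant identities recorded there, so it may be invoked directly.
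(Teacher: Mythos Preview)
Your argument is correct and slightly cleaner than the paper's. Both proofs establish the tripling identity $|\{y\in\YY_{k+1}:y<x\}|=3\,|\{y\in\YY_k:y<x\}|$, but the paper routes through the subset $\ZZ_k$ and splits into two cases according to whether $x\in\ZZ_k$ or $x\in\YY_k\setminus\ZZ_k$, computing in each case how the counts $|\{z\in\ZZ_{k+1}:z<x\}|$ and $|\{z\in\ZZ_k:z<x\}|$ relate. You instead invoke directly the summary statement at the end of the preceding subsection---that $\YY_{k+1}$ is obtained from $\YY_k$ by inserting exactly two mediants into each gap---and count gaps, which avoids the case distinction entirely. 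The paper's version is marginally more self-contained (it re-derives the relevant counts from the description of $\ZZ_{k+1}$ in terms of $\ZZ_k$), whereas yours leans on the already-proved refinement structure of $\DD_E$; either way the content is the same bookkeeping.
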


\begin{proof} \emph{Case 1.} Suppose $x \in \mathcal{Z}_k$. Then
\begin{equation*}
|\{y \in \mathcal{Y}_k : y < x\}| = 2|\{z \in \mathcal{Z}_k : z <x\}| \quad
\mbox{\rm and} \quad
|\{z \in \mathcal{Z}_{k+1} : z <x\}| = 3|\{z \in \mathcal{Z}_k : z <x\}|.
\end{equation*}
The last formula follows from the characterization of $\mathcal{Z}_k$ in equation \eqref{eq3}.
Indeed, if $x, z \in \mathcal{Z}_k$ and $0 <z <x$, then $[(z), (\pm 1, c_k(z))] <x$, and exactly one of
$[(x), (\pm 1, c_k(x))]$ is less than $x$. We therefore have $|\{y \in \mathcal{Y}_{k+1} : y <x\}| = 3|\{y \in \mathcal{Y}_{k} : y <x\}|$,
so by induction, $3^{-k} |\{y \in \mathcal{Y}_k : y < x\}| = 3^{-k-j} |\{y \in \mathcal{Y}_{k+j} : y < x\}|$
for any $j \in \mathbb{N}$, and so $Q_E(x)$ is well-defined.

\emph{Case 2.} Suppose $x \notin \mathcal{Z}_k$. Then
\begin{equation*}
|\{y \in \mathcal{Y}_k : y < x\}| = 2|\{z \in \mathcal{Z}_k : z <x\}| -1,  \quad
|\{z \in \mathcal{Z}_{k+1} : z <x\}| = 3|\{z \in \mathcal{Z}_k : z <x\}|-1,
\end{equation*}
and so
\begin{equation*}
\begin{split}
|\{y \in \mathcal{Y}_{k+1} : y <x\}| & = 2|\{z \in \mathcal{Z}_{k+1} : z <x\}|-1 \\
& = 6|\{z \in \mathcal{Z}_k : z <x\}|-3 = 3|\{y \in \mathcal{Y}_{k} : y <x\}|.
\end{split}
\end{equation*}
As in the previous case, we conclude by induction that $Q_E(x)$ is well-defined.
\end{proof}

Remark that $Q_E (\frac{1}{2k})=\frac{2}{3^k}$, $Q_E (\frac{1}{2k+1})=\frac{1}{3^k}$,
$Q_E (\frac{5}{13})=\frac{2}{3}-\frac{2}{3^2} -\frac{2}{3^3}+\frac{1}{3^3}=
\frac{1}{3}+\frac{1}{3^2}-\frac{1}{3^3}=\frac{11}{27}$, and that (see also Figure \ref{Figure4})
\begin{equation*}
Q_E (\YY_k) = \Big\{ \frac{m}{3^k} : m=0,1,\ldots,3^k \Big\} .
\end{equation*}

\begin{theorem}\label{Thm1}
Let $x = [(e_1,a_1), (e_2, a_2), \ldots ,(e_n, a_n)]$. Then
\begin{equation*}
Q_E(x) = -\sum_{k=1}^{n} \frac{w_k (-e_1)\cdots (-e_k)}{3^{\sum_{i=1}^{k} \lfloor a_i /2 \rfloor}},
\end{equation*}
where $w_k =2$ if $a_k \in 2\mathbb{N}$ and $w_k=1$ if $a_k =1$.
\end{theorem}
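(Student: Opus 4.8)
The plan is to prove the formula by induction on the level $k$ of the Stern--Brocot-type array $\DD_E$, that is, to show that the stated identity holds for every $x\in\YY_k$. The base case $\YY_0=\{0,1\}$ is immediate: reading $0$ as the empty ECF word makes the right-hand side the empty sum $0=Q_E(0)$, while $1=[(1,1)]$ has $w_1=1$, $\lfloor 1/2\rfloor=0$ and $(-e_1)=-1$, giving $-(-1)=1=Q_E(1)$.

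For the inductive step I would combine two facts drawn from the preceding material. First, the counting identity $\lvert\{y\in\YY_{k+1}:y<w\}\rvert=3\lvert\{y\in\YY_k:y<w\}\rvert$ from the proof of the preceding Proposition, together with the fact that consecutive elements of $\YY_k$ have $Q_E$-values differing by exactly $3^{-k}$, gives: if $w_L<w_R$ are consecutive in $\YY_k$ and $w_L<z_1<z_2<w_R$ are the two mediants inserted between them, then $Q_E(z_i)=Q_E(w_L)+i\cdot 3^{-k-1}$ for $i=1,2$. Second, from the description of how $\YY_{k+1}$ is built from $\YY_k$: for a nonzero $x=[(e_1,a_1),\dots,(e_n,a_n)]\in\ZZ_k$ with orientation sign $\sigma:=(-e_1)\cdots(-e_n)$, the new elements of $\XX_{k+1}$ adjacent to $x$ are the four words $[(x),(\sigma,c_k(x)),(1,1)]$, $[(x),(\sigma,c_k(x))]$, $[(x),(-\sigma,c_k(x))]$, $[(x),(-\sigma,c_k(x)),(1,1)]$, all already canonical since $c_k(x)=c_{k-1}(x)+2\ge 2$ is even, and each of them is the mediant of $x$ with $x$ itself or with one of its two $\YY_k$-neighbours — the right neighbour carrying sign $\sigma$, the left one $-\sigma$. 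Putting the two facts together, these four words have $Q_E$-values
\begin{equation*}
Q_E(x)+3^{-k-1},\quad Q_E(x)+2\cdot 3^{-k-1},\quad Q_E(x)-2\cdot 3^{-k-1},\quad Q_E(x)-3^{-k-1},
\end{equation*}
respectively; the element $x=0$ is the degenerate case of the empty word, $\sigma=1$, contributing only the first two.

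It then remains to verify that the closed formula, applied to each of these four words, reproduces the corresponding value, assuming (inductive hypothesis) that it holds for $x$. Since $x\in\ZZ_k$, all $a_i$ are even, so $\sum_{i=1}^n\lfloor a_i/2\rfloor=\tfrac12\sum_{i=1}^n a_i$, and $c_k(x)=2k+2-\sum_{i=1}^n a_i$ yields the exponent identity $\sum_{i=1}^n\lfloor a_i/2\rfloor+\tfrac12 c_k(x)=k+1$. Hence appending $(\pm\sigma,c_k(x))$, and then possibly $(1,1)$, to $x$ leaves the first $n$ summands of the formula unchanged (their sum being $Q_E(x)$) and introduces only summands with denominator $3^{k+1}$, carrying weights $w=2$ for the digit $c_k(x)$ and $w=1$ for a trailing $1$, and new partial products $\mp1$ (for the appended digit $(\pm\sigma,c_k(x))$) and $\pm1$ (for the trailing $1$); a one-line computation then produces exactly the displacements $\pm 2\cdot 3^{-k-1}$ and $\pm 3^{-k-1}$ listed above, which completes the induction.

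The step that needs the most care is the orientation bookkeeping: one has to track at once which neighbour of $x$ a given new word is a mediant of — hence whether its $Q_E$-value sits above or below $Q_E(x)$, and at distance $3^{-k-1}$ or $2\cdot 3^{-k-1}$ — and the sign $\pm\sigma$ it carries in its last ECF digits, and to check that these are compatible for all four words; this compatibility is exactly what the ordering rule recorded earlier (the five-element replacement block around $x$ is listed in increasing order precisely when $(-e_1)\cdots(-e_n)=-1$) provides. An alternative, closer to Denjoy's original derivation, is to extend $Q_E$ to $[-1,1]$ by setting $Q_E(x):=-Q_E(-x)$ for $x<0$, to establish from the fundamental recurrences for convergents the one-step relation $Q_E(x)=(-e_1)\,3^{-\lfloor a_1/2\rfloor}\bigl(Q_E(\wT_E x)-w_1\bigr)$, and to iterate it $n$ times; unwinding the recursion gives the formula, the induction now being on the depth $n$.
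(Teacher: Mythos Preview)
Your argument is correct and rests on the same structural facts as the paper's proof: the five-element replacement block around each $x\in\ZZ_k$, its ordering governed by the sign $\sigma=(-e_1)\cdots(-e_n)$, and the fact that consecutive members of $\YY_m$ are exactly $3^{-m}$ apart in $Q_E$-value. The difference is only in the induction variable. The paper inducts on the depth $n$ of the continued fraction: given $x$ with $n$ digits it appends one more digit $(e_{n+1},a_{n+1})$, locates the resulting $y$ relative to $x$ inside $\YY_{m+\lfloor a_{n+1}/2\rfloor}$, and reads off the single new summand. You instead induct on the array level $k$: going from $\YY_k$ to $\YY_{k+1}$ you enumerate all four new words around each $x\in\ZZ_k$ and verify the formula for each. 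The paper's scheme is a bit leaner because each step produces exactly one new term of the sum, whereas your step sometimes appends two digits at once (the pair $(\pm\sigma,c_k(x)),(1,1)$) and requires checking four cases per anchor; on the other hand, your version makes the equidistribution $Q_E(\YY_k)=\{j/3^k:0\le j\le 3^k\}$ the visible driving mechanism, which is conceptually pleasant. Your alternative functional-equation route via $Q_E(x)=(-e_1)\,3^{-\lfloor a_1/2\rfloor}\bigl(Q_E(\widetilde T_E x)-w_1\bigr)$ is also valid once that identity is established directly from the counting definition, and gives the cleanest derivation of the closed form.
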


\begin{proof}
Let $y = [(x), (e_{n+1}, a_{n+1})]$, and let $m = \sum_{i=1}^{k} \lfloor a_i /2 \rfloor$ so that $x \in \mathcal{Z}_m$.

\emph{Case 1.} Suppose $a_{n+1} =1$. In this case, $e_{n+1}=1$ as well, so $y \in \mathcal{X}_m$. In the ordered $\mathcal{Y}_m$, $y$
is adjacent to $x$. If $(-e_1)\cdots (-e_n) =1$ then $y >x$, and if $(-e_1)\cdots (-e_n) =-1$ then $y <x$. Hence
\begin{equation*}
Q_E(y) = Q_E(x) - \frac{(-e_1)\cdots (-e_{n+1}) }{3^m}.
\end{equation*}

\emph{Case 2.} Suppose $a_{n+1} = 2j$. In this case, $y \in \mathcal{X}_{m+j}$. At this level, the neighbors of $x$ are
$[(x), (1, 2j)]$, $[(x), (1, 2j), (1,1)]$, $x$, $[(x), (-1, 2j), (1,1)]$, $[(x), (-1, 2j)]$ in this order if
$(-e_1)\cdots (-e_n) = -1$, and in the opposite order if $(-e_1)\cdots (-e_n) = 1$. Hence
\begin{equation*}
Q_E(y) = Q_E(x) - \frac{2(-e_1)\cdots (-e_{n+1}) }{3^{m+j}}.
\end{equation*}
Working backwards from the tail of the continued fraction, repeated application of these relations yields the formula stated above.
\end{proof}

We will see that, by continuity, the formula also holds for infinite even continued fraction expansions, with the finite sum replaced by an infinite one.
Since $Q_E(\Q \cap [0,1])$ is dense in $[0,1]$, the continuity of $Q_E$ proved below will also imply that $Q_E$ is strictly increasing on $[0,1]$.
For rationals which have both an infinite and a finite even continued fraction expansion, the infinite expansion is obtained from the finite one by
replacing the last term $[ \ldots (1,1)]$ with $ [ \ldots (1,2), (-1,2),(-1,2), \ldots ]$. Using the equality $ \sum_{k=1}^{\infty} \frac{2}{3^k} =1$,
it is straightforward to check that the two sums coincide.

\begin{theorem}\label{evencont}
$Q_E(x)$ is H\"older continuous, with best exponent $\frac{\log 3}{2 \log (1+ \sqrt{2})}$.
\end{theorem}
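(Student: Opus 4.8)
The plan is to extract everything from the ternary subdivision encoded by the array $\DD_E$. For $n\ge 0$ let $\mathcal P_n$ denote the partition of $[0,1]$ into the $3^n$ closed intervals whose endpoints are consecutive elements of $\YY_n$. Then $\mathcal P_{n+1}$ refines $\mathcal P_n$, each interval of $\mathcal P_n$ splitting into exactly three; by the definition of $Q_E$ and the count $|\YY_n|=3^n+1$, the $j$-th interval of $\mathcal P_n$ is carried by $Q_E$ onto the triadic interval $[j3^{-n},(j+1)3^{-n}]$; and consecutive elements $\frac pq<\frac rs$ of $\YY_n$ are Farey neighbours ($ps-qr=\pm1$), so the interval between them has length exactly $1/(qs)$. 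The whole theorem then reduces to the following growth estimate, which is the heart of the matter: \emph{there is $C>0$ such that every denominator occurring in $\YY_n$ is at most $C(1+\sqrt2)^{n}$, equivalently every interval of $\mathcal P_n$ has length at least $C^{-2}(1+\sqrt2)^{-2n}$.}

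To prove this estimate I would track, for an interval of $\mathcal P_n$, the pair $(q,s)$ consisting of the denominator of its endpoint lying in $\ZZ_n$ and that of the opposite endpoint. From the two-successive-mediants description of the passage from $\YY_n$ to $\YY_{n+1}$, the three children of this interval carry the denominator pairs $(q,2q+s)$, $(q+s,2q+s)$ and $(q+s,s)$; that is, $(q,s)$ is transformed by one of the linear maps $A_1(q,s)=(q,2q+s)$, $A_2(q,s)=(q+s,2q+s)$, $A_3(q,s)=(q+s,s)$, starting from $(q,s)=(1,1)$ at level $0$. On the positive cone consider the norm $\|(q,s)\|:=\sqrt2\,q+s$. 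A direct computation gives $\|A_2(q,s)\|=(1+\sqrt2)\|(q,s)\|$ and $\|A_1(q,s)\|,\|A_3(q,s)\|\le(1+\sqrt2)\|(q,s)\|$ for all $q,s\ge 0$, so any pair arising at level $n$ satisfies $\|(q,s)\|\le(1+\sqrt2)^{n}\|(1,1)\|=(1+\sqrt2)^{n+1}$, which is the estimate. (Alternatively one may use \eqref{eq3} to bound a denominator in $\YY_n$ by the largest continuant of a string of even digits of sum $\le 2n+1$; by log-concavity this is the all-$2$'s string, whose continuants are the Pell numbers $\asymp(1+\sqrt2)^{n}$.) Checking carefully that these three maps really govern the recursion — and thereby that the sharp rate is $1+\sqrt2$, the ``silver ratio'' of the purely periodic expansion $[(1,2),(1,2),\dots]$, rather than a larger exponential — is the step I expect to demand the most care.

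Granting the estimate, $Q_E$ is H\"older with exponent $\alpha:=\frac{\log3}{2\log(1+\sqrt2)}$: I would prove this first for $x,y\in\Q\cap[0,1]$, where $Q_E$ is already defined, by a two-regime argument, and then extend to $[0,1]$ by uniform continuity and density — thereby also establishing the continuity of $Q_E$ alluded to after Theorem \ref{Thm1}. Given $x<y$, let $m\ge 0$ be the largest integer for which the intervals of $\mathcal P_m$ containing $x$ and $y$ coincide or are adjacent; $m$ is finite because every subdivision strictly increases the larger endpoint-denominator of an interval, so that the mesh of $\mathcal P_n$ tends to $0$. On one hand $Q_E(x)$ and $Q_E(y)$ then lie in a union of at most two adjacent triadic intervals of length $3^{-m}$, so $|Q_E(x)-Q_E(y)|\le 2\cdot3^{-m}$. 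On the other hand, at level $m+1$ the intervals of $\mathcal P_{m+1}$ containing $x$ and $y$ are neither equal nor adjacent, so at least one full interval of $\mathcal P_{m+1}$ lies between them, and the estimate gives $|x-y|\ge C^{-2}(1+\sqrt2)^{-2(m+1)}$. Since $(1+\sqrt2)^{2\alpha}=3$, combining the two bounds yields
\[
\frac{|Q_E(x)-Q_E(y)|}{|x-y|^{\alpha}}\ \le\ \frac{2\cdot3^{-m}}{C^{-2\alpha}(1+\sqrt2)^{-2\alpha(m+1)}}\ =\ 6\,C^{2\alpha},
\]
a bound independent of $x$ and $y$.

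Finally, to see $\alpha$ is best possible, I would test this bound on the Pell convergents: let $x_n=[(1,2),\dots,(1,2)]$ ($n$ blocks) and let $y_n$ be its neighbour in $\YY_n$, so $x_n$ and $y_n$ are consecutive in $\YY_n$ and hence $|Q_E(x_n)-Q_E(y_n)|=3^{-n}$, while both of their denominators are $\asymp(1+\sqrt2)^{n}$, so $|x_n-y_n|\asymp(1+\sqrt2)^{-2n}$. For any $\beta>\alpha$ one then has $|Q_E(x_n)-Q_E(y_n)|/|x_n-y_n|^{\beta}\asymp\bigl((1+\sqrt2)^{2\beta}/3\bigr)^{n}\to\infty$, so $Q_E$ is not H\"older of exponent $\beta$.
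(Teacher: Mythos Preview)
Your argument is correct, and its architecture matches the paper's: bound the denominators appearing in $\YY_n$ by a constant times $(1+\sqrt2)^n$, deduce the H\"older inequality from the resulting lower bound on the mesh of $\mathcal P_n$ together with the trivial upper bound $|Q_E(x)-Q_E(y)|\le O(3^{-n})$, and then test sharpness on the purely periodic point $\sqrt2-1$.

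Where you genuinely diverge is in the proof of the denominator estimate. The paper (Proposition~\ref{evendenom}) works directly with the continuant recurrence $q_k=a_kq_{k-1}+e_kq_{k-2}$ and reduces to the elementary inequality $a+\theta^{-1}\le\theta^{a/2}$ for $a\in\{1\}\cup2\N$, $\theta=1+\sqrt2$. You instead encode the refinement $\mathcal P_n\to\mathcal P_{n+1}$ by the three substitutions $A_1,A_2,A_3$ on ordered denominator pairs and observe that $\|(q,s)\|=\sqrt2\,q+s$ is a common Perron-type functional with $\|A_i\|\le 1+\sqrt2$ and equality for $A_2$; this is a clean joint-spectral-radius argument that makes the silver ratio appear structurally rather than by inspection. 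Your sharpness test is also packaged a bit differently (two consecutive elements of $\YY_n$ with Pell denominators, rather than $\sqrt2-1$ versus its $n$th convergent), but the content is the same. One small point worth making explicit if you write this up: your claim that consecutive elements of $\YY_n$ are Farey neighbours is used both for the interval-length formula $1/(qs)$ and, implicitly, for the fact that the three children really have the denominator pairs you list; it follows by induction from the two-mediant rule but is not stated as such in the paper.
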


Before proving this, we need a fact about the growth of the ECF continuants.

\begin{proposition}\label{evendenom}
Let $\frac{p_n}{q_n} = [(1,a_1), (e_2, a_2), \ldots , (e_n, a_n)]$ and let $\theta = 1+ \sqrt{2}$. Then
$$q_n < \theta^{(a_1+\cdots+a_n)/2}.$$
\end{proposition}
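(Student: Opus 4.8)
The plan is to set up the standard continuant recursion for even continued fractions and prove the bound by induction on $n$. Recall that for the expansion $\frac{p_n}{q_n} = [(1,a_1),(e_2,a_2),\dots,(e_n,a_n)]$, the denominators satisfy the fundamental recurrence $q_n = a_n q_{n-1} + e_n q_{n-2}$ (with the conventions $q_0 = 1$, $q_{-1} = 0$, see \cite{Kra}), and the worst case for the growth occurs when $e_n = +1$ and $a_n$ is as small as possible, i.e.\ $a_n = 2$ (the digit $a_n = 1$ only occurs in the last slot and with $e_n = +1$, which is harmless since it replaces a contribution of $\theta^{2/2} = \theta$ in the exponent by $\theta^{1/2}$, still dominating the increment). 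So the first step is to reduce to checking the inequality for the recursion $q_n \le 2 q_{n-1} + q_{n-2}$, whose characteristic equation is $x^2 - 2x - 1 = 0$ with dominant root exactly $\theta = 1 + \sqrt{2}$.

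Concretely, I would prove the stronger two-term statement by strong induction: writing $s_k = a_1 + \cdots + a_k$, I claim $q_n \le \theta^{s_n/2}$ and, to make the induction close, I would also track $q_{n-1} < \theta^{s_{n-1}/2}$ simultaneously (a paired hypothesis on consecutive continuants). The base cases $n = 0, 1$ are immediate: $q_0 = 1 = \theta^0$ and $q_1 = a_1 \le \theta^{a_1/2}$ since $\theta^{a_1/2} \ge \theta^{1} = 1 + \sqrt 2 > a_1$ when $a_1 \in \{1, 2\}$ and grows faster than $a_1$ for larger even $a_1$ (as $\theta^{k} > 2k$ for all $k \ge 1$). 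For the inductive step with $a_n = 2j$, $e_n = \pm 1$, we estimate
\[
q_n = 2j\, q_{n-1} + e_n q_{n-2} \le 2j\, q_{n-1} + q_{n-2} \le 2j\, \theta^{s_{n-1}/2} + \theta^{s_{n-2}/2}
= \theta^{s_{n-1}/2}\bigl(2j + \theta^{-(a_{n-1})/2}\bigr),
\]
and it suffices to show $2j + \theta^{-a_{n-1}/2} \le \theta^{j}$; since $a_{n-1} \ge 1$ gives $\theta^{-a_{n-1}/2} \le \theta^{-1/2} < 1$, it is enough that $2j + 1 \le \theta^{j}$, which holds for every integer $j \ge 1$ (check $j=1$: $3 < 1+\sqrt2 \approx 2.414$ — \emph{this fails}, so I must be more careful here).

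The fix, and the one genuinely delicate point, is that the crude bound $2j+1 \le \theta^j$ is false for $j = 1$, so one cannot afford to throw away the factor $\theta^{-a_{n-1}/2}$ when $a_{n-1}$ is small; instead I would strengthen the induction hypothesis to a strict inequality $q_{n-1} \le c\,\theta^{s_{n-1}/2}$ with a constant $c < 1$ (for instance $c = \theta^{-1}$, valid once $n \ge 1$ since $q_1 = a_1 \le \theta^{a_1/2 - 1}\cdot\theta \le$ — again needs the exact check), chosen so that the leftover slack absorbs the $j=1$ deficit. Equivalently, and more cleanly: when $a_n = 2$ one should use the \emph{two} preceding relations $q_{n-2} \le \theta^{s_{n-2}/2}$ together with $a_{n-1} \ge 2$ (if $a_{n-1} = 1$ then $n-1 = n$, impossible) to get $q_n = 2q_{n-1} + e_n q_{n-2} \le 2\theta^{s_{n-1}/2} + \theta^{s_{n-1}/2 - 1} = \theta^{s_{n-1}/2}(2 + \theta^{-1}) = \theta^{s_{n-1}/2}(2 + \sqrt2 - 1) = \theta^{s_{n-1}/2}\cdot(1+\sqrt2) = \theta^{s_n/2}$, using $2 + \theta^{-1} = 1 + \sqrt 2 = \theta$ exactly — this is the identity that makes everything fit, and it is why the exponent base is precisely $1 + \sqrt2$. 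For $j \ge 2$ the crude bound $2j + 1 \le \theta^{j}$ does hold ($j=2$: $5 < 5.83$), so those cases go through directly. I expect assembling these case distinctions — $a_n = 1$ (last digit), $a_n = 2$ (the tight case, handled by the exact identity $2 + \theta^{-1} = \theta$), and $a_n \ge 4$ (slack to spare) — with a correctly strengthened paired induction hypothesis to be the only real work; everything else is bookkeeping with the recurrence $q_n = a_n q_{n-1} + e_n q_{n-2}$.
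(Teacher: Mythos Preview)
Your approach is essentially the paper's: induction via the recurrence $q_k = a_k q_{k-1} + e_k q_{k-2}$, reducing to the inequality $a_k + \theta^{-a_{k-1}/2} \le \theta^{a_k/2}$, using that $a_{k-1} \ge 2$ (only the final digit can be $1$), and identifying the tight case $a_k = 2$ via the exact identity $2 + \theta^{-1} = \theta$. The paper goes straight to this without your false start (the crude $2j+1 \le \theta^j$) or the abandoned strengthening with a constant $c<1$; it also handles the case $a_k = 1$ by the explicit check $1 + \theta^{-1} < \theta^{1/2}$, which you only handwave, and your base-case line for $a_1 = 1$ should read $\theta^{1/2}$, not $\theta^{1}$.
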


\begin{proof}
Observe that $q_1 = a_1 < \theta^{a_1/2}$ holds for all $a_1 \in \mathbb{N}$, and $q_0 =1 = {\theta}^0$.
We have the relation $q_k = a_kq_{k-1} + e_{k}q_{k-2}$. Assuming the claim holds for $n =k-2, k-1$, then
\begin{equation*}
a_kq_{k-1} + e_{k}q_{k-2} \leq a_k \theta^{(a_1+\cdots+a_{k-1})/2} + \theta^{(a_1+\cdots+a_{k-2})/2} .
\end{equation*}
So it is sufficient to show that
\begin{equation*}
a_k \theta^{a_{k-1}/2} +1 \leq \theta^{(a_k + a_{k-1})/2},
\end{equation*}
or equivalently that
\begin{equation*}
a_k + \theta^{-a_{k-1}/2} \leq \theta^{a_k/2}.
\end{equation*}
Since $a_k \neq 0$, we must have  $a_{k-1} \geq 2$, so it is sufficient to prove
\begin{equation*}
a_k + \frac{1}{\theta} \leq \theta^{a_k/2},
\end{equation*}
which is always true: we verify that
\begin{equation*}
1 + \frac{1}{\theta} < \theta^{1/2} \text { and } 2 + \frac{1}{\theta} = \theta.
\end{equation*}
For $a_i >2$, it is sufficient to observe that $\theta^{x/2} -x$ is increasing for $x \geq 2$, with derivative
$\frac{1}{2} \theta^{x/2}\log(\theta) -1>0$.
\end{proof}

\begin{rmk*} The exponent in the proposition is the best possible, and it is attained by the convergents of $\sqrt{2}-1 = [(1,2),(1,2),(1,2),\ldots ]$.
\end{rmk*}

\begin{proof} Notice that since each $a_i =2$, the denominators satisfy the recurrence relation $q_k = 2q_{k-1} + q_{k-2}$, and hence are given
by the sequence $1,1,3,7,17,\ldots$, which has the closed form
\begin{equation*}
q_k = \frac{(1 + \sqrt{2})^k + (1 - \sqrt{2})^k}{2}.
\end{equation*}
Asymptotically, $q_k \sim \frac{1}{2}\theta^k$, so the bound $q_k \leq \theta^{(a_1+\cdots+a_k)/2} = \theta^k$ cannot be improved.
\end{proof}

\begin{proof}[Proof of Theorem \ref{evencont}]
Let $x < x'$ in $\Q \cap [0,1]$, and let $y = Q_E(x)$, $y'= Q_E(x')$. Consider $\mathcal{Y}_k$ for the first $k$ such that we have
$x \leq r  < r' \leq x'$ for some $r, r' \in \mathcal{Y}_k$. From the bound on the denominators proved in Proposition $\ref{evendenom}$
we must have $x' -x \geq r' -r \geq \frac{1}{\theta^{2k+2}}$ since $r'$ and $r$ are distinct rationals, each with denominator
at most $\theta^{k+1}$. Since there can be at most 5 elements of $\mathcal{Y}_k$ between
$x$ and $x'$, we have $y' -y \leq \frac{6}{3^k}$. These yield
$$y' -y < (e^{1+ \log_3 6})(x'-x)^{\frac{\log 3}{2 \log \theta}}.$$
To see that this is the best possible exponent, consider $x = \sqrt{2}-1 = [(1,2),(1,2),(1,2), \ldots]$. Let $\frac{p_k}{q_k}$ be the $k$th convergent of $x$.
We have $Q_E(x) = \sum_{j=1}^{\infty} \frac{2 (-1)^{j+1}}{3^j}=\frac{1}{2}$
and $Q_E(\frac{p_k}{q_k}) = \sum_{j=1}^{k} \frac{2 (-1)^{j+1}}{3^j}$, so $|Q_E(x) - Q_E(\frac{p_k}{q_k})|$ is of order
$\frac{1}{3^k}$. Using $q_{k+1}>q_k$, observe that $|x - \frac{p_k}{q_k}| < |\frac{p_{k+1}}{q_{k+1}} - \frac{p_k}{q_k}|< \frac{1}{q_k^2}$.
We know that $q_k$ is of the same order as $\theta^k$, so we have

\begin{equation*}
\bigg| x - \frac{p_k}{q_k}\bigg|^{\frac{\log 3}{2 \log \theta}} \lesssim \theta^{-2k \cdot \frac{\log 3}{2 \log \theta}} = \frac{1}{3^k}.
\end{equation*}
Hence the exponent $\frac{\log 3}{2 \log \theta} = \frac{\log 3}{2 \log (1 +\sqrt{2})}$ is best possible.
\end{proof}

\begin{theorem}
$Q_E(x)$ is singular.
\end{theorem}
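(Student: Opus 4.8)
The plan is to recast singularity as a statement about the Stieltjes measure of $Q_E$ and then exploit the fact that the even Gauss map $T_E$ possesses no \emph{finite} absolutely continuous invariant measure. Let $\nu$ be the Borel probability measure on $[0,1]$ with $\nu([0,x])=Q_E(x)$; since $Q_E$ is continuous and strictly increasing, $Q_E$ is singular exactly when $Q_E'=0$ Lebesgue-a.e., i.e.\ exactly when the absolutely continuous part of $\nu$ vanishes, i.e.\ $\nu\perp\mathrm{Leb}$. So it suffices to prove $\nu\perp\mathrm{Leb}$.

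\emph{Step 1: $\nu$ is $T_E$-invariant.} For an admissible prefix $(1,a_1),(e_2,a_2),\dots,(e_n,a_n)$ with all $a_i\in 2\N$, the ECF cylinder $C=C[(1,a_1),(e_2,a_2),\dots,(e_n,a_n)]\subset[0,1]$ is an interval whose two endpoints are obtained from the prefix by appending $(1,1)$, respectively $(-1,2),(-1,2),\dots$ (note $[(-1,2),(-1,2),\dots]=-1$). Evaluating Theorem~\ref{Thm1}, in its form valid for infinite expansions, at these two endpoints, all terms except the one of index $n+1$ cancel, and one obtains $\nu(C)=2\cdot 3^{-(a_1+\cdots+a_n)/2}$ (the sign $(-e_1)\cdots(-e_n)$ only determines which endpoint is larger and drops out of $|\nu(C)|$). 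Since $T_E$ acts by deleting the leading pair (and normalizing the new leading sign to $+1$), $T_E^{-1}C=\bigsqcup_{a_0\in2\N}\bigsqcup_{e\in\{\pm1\}}C[(1,a_0),(e,a_1),(e_2,a_2),\dots,(e_n,a_n)]$ up to a countable set, and summing the cylinder formula over $e$ and $a_0$ gives $\nu(T_E^{-1}C)=2\bigl(\sum_{k\ge1}3^{-k}\bigr)\nu(C)=\nu(C)$, using $\sum_{k\ge1}3^{-k}=\tfrac12$. As cylinders generate the Borel $\sigma$-algebra, $\nu$ is $T_E$-invariant. (Equivalently, once one knows that $Q_E$ conjugates $T_E$ to its piecewise-linear model $\overline T_E=Q_ET_EQ_E^{-1}$, whose branches are affine and surjective and which therefore preserves Lebesgue measure, invariance of $\nu=(Q_E^{-1})_*\mathrm{Leb}$ is immediate.)

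\emph{Step 2: the absolutely continuous part of $\nu$ is zero.} Write $\nu=\nu_{ac}+\nu_s$ for the Lebesgue decomposition. Each branch of $T_E$ is a $C^1$ diffeomorphism onto its image with non-vanishing derivative, so both $T_E$ and $T_E^{-1}$ carry Lebesgue-null sets to Lebesgue-null sets; hence $(T_E)_*\nu_{ac}$ is absolutely continuous and $(T_E)_*\nu_s$ is singular, and since $(T_E)_*\nu=\nu$, uniqueness of the Lebesgue decomposition forces $(T_E)_*\nu_{ac}=\nu_{ac}$. Thus $\nu_{ac}$ is a finite, absolutely continuous, $T_E$-invariant measure. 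But $T_E$ is conservative and ergodic for the Lebesgue measure class (see \cite{Sch1}; conservativity also follows from the existence of the equivalent $\sigma$-finite invariant measure $\mu_E$), so its absolutely continuous $\sigma$-finite invariant measure is unique up to a positive scalar and equals $\mu_E$. Since $\mu_E$ has \emph{infinite} total mass — its density $\frac1{1+x}+\frac1{1-x}$ is not integrable at the indifferent fixed point $x=1$, as $\int_0^1\frac{dx}{1-x}=\infty$ — while $\nu_{ac}\le\nu$ is finite, we conclude $\nu_{ac}=0$. Hence $\nu=\nu_s\perp\mathrm{Leb}$, so $Q_E'=0$ Lebesgue-a.e., i.e.\ $Q_E$ is singular.

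\emph{Obstacles and an alternative.} The two delicate points are the endpoint bookkeeping in Step~1 (one must respect the ECF uniqueness convention and check that the signs $(-e_i)$ cancel in $|\nu(C)|$) and, in Step~2, invoking the correct ergodicity/uniqueness statement for the parabolic map $T_E$ — which, if one prefers a self-contained route, can instead be deduced from ergodicity of the classical Gauss map via the relation between even and regular continued fractions. A more computational proof bypasses ergodic theory: by Theorem~\ref{Thm1}, $\nu(C_n(x))\asymp 3^{-(a_1+\cdots+a_n)/2}$ for the depth-$n$ ECF cylinder, while $\mathrm{Leb}(C_n(x))\asymp q_n^{-2}$ by the usual continuant estimate; it then suffices to prove $q_n^2\,3^{-(a_1+\cdots+a_n)/2}\to0$ for Lebesgue-a.e.\ $x$ and to apply the density-zero criterion (a point at which this ratio tends to $0$ along a subsequence cannot carry $\nu_{ac}$, and since ECF cylinders are nested a Vitali-type covering argument then exhibits a Lebesgue-null set carrying $\nu$). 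The required growth estimate follows from Proposition~\ref{evendenom} together with $\frac1n(a_1+\cdots+a_n)\to\infty$ and $\frac1n\log q_n=O(1)$ for Lebesgue-a.e.\ $x$; these last two one obtains by comparing $x$'s even and regular continued fractions and applying L\'evy's theorem and the fact that the first regular partial quotient has infinite mean for the Gauss measure. Making that comparison quantitative is the main obstacle along this second route.
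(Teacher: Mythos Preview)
Your main argument is correct and takes a genuinely different route from the paper. The paper works directly with difference quotients along ECF convergents: it bounds $|x-p_n/q_n|$ and $|Q_E(x)-Q_E(p_n/q_n)|$, shows that the ratio $r_n$ satisfies $r_{i_k-1}/r_{i_k-2}\to 0$ along a suitable subsequence whenever the digits $a_i$ are unbounded, and then proves separately (via the insertion/singularization algorithm relating ECF to RCF) that the $a_i$ are unbounded Lebesgue-a.e. Your route is measure-theoretic: the Stieltjes measure $\nu$ of $Q_E$ is $T_E$-invariant (this is immediate once one has the conjugacy $Q_E T_E Q_E^{-1}=\overline{T}_E$, since $\overline{T}_E$ preserves Lebesgue), and since the only $\sigma$-finite a.c.\ $T_E$-invariant measure $\mu_E$ has infinite mass while $\nu$ is a probability measure, the a.c.\ part of $\nu$ must vanish. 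Your argument is shorter, more conceptual, and transplants verbatim to any situation where the Gauss-type map has a parabolic fixed point forcing the a.c.\ invariant measure to be infinite; the cost is that it imports ergodicity of $T_E$ in the Lebesgue class, which is not proved in the paper and must be cited (your hedge about this is appropriate). The paper's argument is self-contained modulo the separately proved a.e.\ unboundedness of ECF digits, and it also yields explicit rate information along convergents. Your sketched ``alternative'' at the end --- comparing $q_n^2$ with $3^{(a_1+\cdots+a_n)/2}$ along convergents --- is much closer in spirit to what the paper actually does, though the paper organises the estimate via the recursive bound on $r_n/r_{n-1}$ rather than via a.e.\ growth of digit averages.
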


\begin{proof}
Let $x = [(e_1,a_1), (e_2, a_2), \ldots ]$ with ECF convergents
$\frac{p_n}{q_n} = [(e_1,a_1), \ldots ,(e_n, a_n)]$,
and let $Q_E(x) =y$. Let also $t_n := [(e_{n+2}, a_{n+2}), (e_{n+3}, a_{n+3}), \ldots ]$. We have
\begin{equation*}
\begin{split}
x & = \frac{(a_{n+1} +t_n)p_n + e_{n+1} p_{n-1}}{(a_{n+1} +t_n) q_n + e_{n+1} q_{n-1}}
\quad \mbox{\rm and (see \cite{Kra})} \\
\bigg| x - \frac{p_n}{q_n} \bigg| & = \bigg|\frac{e_{n+1} (q_n p_{n-1} - p_n q_{n-1})}{q_n((a_{n+1} +t_n) q_n + e_{n+1} q_{n-1})}\bigg|
= \frac{1}{q_n((a_{n+1} +t_n) q_n + e_{n+1} q_{n-1})}.
\end{split}
\end{equation*}
Since $|t_n| \leq 1$, in the case where $a_{n+1} > 2$ we have the inequalities
\begin{equation*}
\frac{1}{q_n^2(a_{n+1}+2)} < \bigg| x - \frac{p_n}{q_n} \bigg| < \frac{1}{q_n^2(a_{n+1}-2)}.
\end{equation*}
In the case where $a_{n+1} =2$, we still have
\begin{equation*}
\bigg| x - \frac{p_n}{q_n} \bigg| \leq\frac{1}{q_n^2(1 - \frac{q_{n-1}}{q_n})} \leq \frac{1}{q_n^2(\frac{a_n}{a_n +1})} < \frac{3}{2q_n^2} .
\end{equation*}
Applying the formula for $Q_E(x)$, we have $y- Q_E(\frac{p_n}{q_n}) =
-2 \sum_{k=n+1}^{\infty} \frac{(-e_1)\cdots (-e_k) }{3^{(a_1+\cdots +a_k)/2 }}$ and so
\begin{equation*}
\frac{1}{3^{(a_1+\cdots+a_{n+1})/2}} < \bigg| y - Q_E \bigg( \frac{p_n}{q_n}\bigg) \bigg| \leq \frac{1}{3^{-1+(a_1+\cdots+a_{n+1})/2}}.
\end{equation*}

Letting $r_n = \big| \frac{y -Q_E(p_n/q_n)}{x-p_n/q_n } \big|$ we have
\begin{equation*}
\begin{split}
\frac{ q_n^2 (a_{n+1}-2)}{3^{(a_1+\cdots+a_{n+1})/2}} & < r_n < \frac{ q_n^2 (a_{n+1}+2)}{3^{-1+(a_1+\cdots+a_{n+1})/2}},
\quad \mbox{\rm and} \\
\frac{r_{n}}{r_{n-1}} & <\frac{ q_n^2 (a_{n+1}+2)}{3^{-1+(a_1+\cdots+a_{n+1})/2}} \cdot  \frac{2\cdot3^{(a_1+\cdots+a_{n})/2}}{ 3 q_{n-1}^2}
 = \frac{2 (a_{n+1} +2)}{ 3^{a_{n+1}/2}} \cdot \left(\frac{q_{n}}{q_{n-1}}\right)^2 \\
 & < \frac{2 (a_{n+1} +2)}{ 3^{a_{n+1}/2}} \cdot (a_n +1)^2
 < \frac{2 (a_{n+1} +2)(a_n+1)^2}{ 3^{a_{n+1}/2}}.
\end{split}
\end{equation*}

If for some $x$ the $a_i$ are unbounded, then we may consider the subsequence  $a_{i_k}$ where $i_1 = \inf\{i : a_i  >2\}$ and
$i_{k+1} = \inf\{i : a_i  >a_{i_{k}}\}$. Then for every $k$ we have $2 <a_{i_k}$ and $a_{i_k -1} < a_{i_k}$, so the above will imply that
$$\frac{r_{{i_k}-1}}{r_{{i_k}-2}} <\frac{2 (a_{i_k} +2)(a_{i_k-1}+1)^2}{ 3^{a_{i_k}/2}} <\frac{2 (a_{i_k} +2)^3}{ 3^{a_{i_k}/2}},$$
which converges to $0$. This implies that if the derivative of $?(x)$ exists and is finite,
it must be equal to $0$. As we will see in the next proposition, the $a_i$ are in fact unbounded for almost every $x$.
Since $Q_E(x)$ is monotone, the derivative must in fact exist almost everywhere, and hence $Q_E(x)$ is singular.
\end{proof}

\begin{proposition}
The set of $x$ with bounded even partial quotients has measure $0$.
\end{proposition}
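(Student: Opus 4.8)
The plan is to prove that the set of $x\in[0,1]$ whose even partial quotients $a_i$ are all bounded has Lebesgue measure zero by a standard Borel--Cantelli / cylinder-measure argument with respect to the even Gauss map $T_E$ and its absolutely continuous invariant measure $\mu_E$. First I would fix a bound $M\in 2\mathbb{N}$ and let $E_M=\{x\in(0,1): a_i(x)\le M \text{ for all }i\}$; since the set of $x$ with bounded partial quotients is $\bigcup_{M}E_M$ (a countable union over even $M$), it suffices to show each $E_M$ is Lebesgue-null. Because $d\mu_E(x)=\bigl(\frac{1}{1+x}+\frac{1}{1-x}\bigr)dx$ is equivalent to Lebesgue measure on $[0,1]$ (its density is bounded between $2$ and, away from $1$, a finite constant; on $[0,1-\delta]$ it is comparable to Lebesgue measure), it is enough to show $\mu_E(E_M)=0$, or even just $\mu_E(E_M\cap[0,1-\delta])=0$ for each $\delta>0$.

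The key step is a uniform gain estimate: there is a constant $c=c(M)<1$ such that for every admissible ECF word $(e_1,a_1),\dots,(e_n,a_n)$ with all $a_i\le M$, the corresponding cylinder (the set of $x$ whose ECF expansion begins with that word) satisfies
\begin{equation*}
\mu_E\bigl(\{x: a_1(x)\le M\}\cap \text{(cylinder of length }n)\bigr)\le c\cdot \mu_E(\text{cylinder of length }n).
\end{equation*}
Equivalently, writing $C_n$ for a length-$n$ cylinder and using that $T_E^n$ maps $C_n$ onto $(0,1)$ (or $(-1,1)$ for the extended map) with bounded distortion, the conditional $\mu_E$-measure of $\{a_{n+1}\le M\}$ given $C_n$ is bounded above by some $c<1$ uniformly in $n$ and in $C_n$. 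Granting this, iterating gives $\mu_E(\{x: a_1(x),\dots,a_n(x)\le M\})\le c^{\,n}\mu_E((0,1))\to 0$, hence $\mu_E(E_M)=0$ and $E_M$ is Lebesgue-null; summing over $M$ finishes the proof.

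To obtain the uniform gain estimate I would use bounded distortion for the even Gauss map: the branches of $T_E$ are the maps $x\mapsto \frac{1}{2k\pm x}$ (or, on the interval $[\frac1{2k+1},\frac1{2k-1}]$, the inverse branch of $\left|\frac1x-2k\right|$), which are Möbius with uniformly bounded nonlinearity, so the composition $(T_E^n)^{-1}$ restricted to a cylinder distorts $\mu_E$-measure by at most a fixed multiplicative constant $K$ independent of $n$. Concretely, the simplest route is via the invariance of $\mu_E$: $\mu_E(\{x: a_1(x)\le M\})=\mu_E\bigl(\bigcup_{k=1}^{M/2}[\frac1{2k+1},\frac1{2k-1}]\bigr)=\mu_E([\frac1{M+1},1])<\mu_E([0,1])$, call the ratio $c_0<1$; then by the Markov structure and bounded distortion, the conditional measure of $\{a_{n+1}\le M\}$ on any length-$n$ cylinder is at most $K c_0<1$ provided we first pass to a high enough iterate so that $K c_0^{\,j}<1$ — that is, one shows $\mu_E(\{a_{n+1}\le M,\dots,a_{n+j}\le M\}\mid C_n)\le K c_0^{\,j}$ and chooses $j$ with $Kc_0^{\,j}<1$. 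The main obstacle is controlling the distortion constant $K$ uniformly, in particular near the endpoint $x=1$ where the density of $\mu_E$ blows up; this is precisely why I would work on $[0,1-\delta]$, noting that $\{a_1(x)\le M\}\subseteq[\frac1{M+1},1]$ and that after one application of $T_E$ the image of a cylinder with bounded digits stays bounded away from $1$ in $\mu_E$-measure, so the singularity of the density is harmless and a finite distortion bound holds. Alternatively, and perhaps more cleanly, one can invoke ergodicity of $T_E$ with respect to $\mu_E$ (standard for these continued fraction maps, cf.\ \cite{Sch1}): $E_M$ is a $T_E$-invariant set of positive $\mu_E$-measure would force $\mu_E(E_M)=1$, which is absurd since $E_M$ misses every cylinder with a digit exceeding $M$, e.g.\ $\mu_E(E_M)\le\mu_E(\{a_1\le M\})=c_0<1$; hence $\mu_E(E_M)=0$.
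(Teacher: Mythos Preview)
Your approach does not go through as written, and the obstruction is structural rather than cosmetic: the even Gauss map $T_E$ has an \emph{indifferent} fixed point at $x=1$. Indeed, on the branch $(\tfrac12,1)$ one has $T_E(x)=2-\tfrac1x$, so $T_E(1)=1$ and $T_E'(1)=1$. This has two consequences that break both of your proposed routes. First, the invariant measure $\mu_E$ is infinite: $\int_0^1\frac{2}{1-x^2}\,dx=\infty$. Since $\{a_1\le M\}=[\tfrac{1}{M+1},1]$ contains a full neighbourhood of $1$, you also have $\mu_E(\{a_1\le M\})=\infty$, so the ratio $c_0$ you define does not exist, and the sentence ``$\mu_E(E_M)\le \mu_E(\{a_1\le M\})=c_0<1$'' is simply false. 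Second, bounded distortion fails: on the depth-$n$ cylinder with all $(e_i,a_i)=(-1,2)$ (an interval $(c_n,1)$ with $1-c_n\asymp 1/n$), the derivative of $T_E^n$ ranges from $1$ (at $x=1$) to order $n^2$ (at $x=c_n$), so there is no uniform constant $K$ as you claim. Restricting to $[0,1-\delta]$ does not help, because precisely the cylinders with the smallest digits (all equal to $2$) accumulate at $1$, so $E_M$ itself lives right up against the parabolic point. Your ergodicity alternative has the right spirit but is also incorrect as stated: $E_M$ is only forward-invariant, not $T_E^{-1}$-invariant, and for an infinite invariant measure ergodicity yields ``$\mu_E(A)=0$ or $\mu_E(A^c)=0$'', not ``measure $0$ or $1$''. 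One can repair this by passing to $\tilde E_M=\bigcup_n T_E^{-n}(E_M)$ and invoking conservativity of $(T_E,\mu_E)$ to get that almost every point visits $\{a_1>M\}$ infinitely often, but that is a different (and longer) argument than what you wrote, and it requires citing conservativity and ergodicity for this specific infinite-measure system.

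The paper sidesteps all of this by a completely different and much shorter argument: it uses that Lebesgue-a.e.\ $x$ is normal for the \emph{regular} continued fraction, hence contains, for every $k$, two consecutive RCF partial quotients exceeding $k$; then the singularization/insertion algorithm that converts the RCF expansion to the ECF expansion cannot collapse two consecutive large RCF digits without producing at least one ECF digit larger than $k$. No distortion estimates, no infinite ergodic theory.
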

\begin{proof}
It is well-known that almost every number is normal with respect to the regular continued fraction. (The results of \cite{KraNak} can
perhaps be extended to show that this in fact implies being normal with respect to the even continued fraction, although we only need a much weaker result.)

For each $k>0$, every number which is normal with respect to the regular continued fraction expansion will have at some point in its regular continued fraction expansion two consecutive
$a_i, a_{i+1} > k$. When applying the singularization and insertion algorithm (see  \cite{Mas} Section 1.3) to obtain the even continued fraction expansion, partial quotients which are greater than $1$ are either increased, or replaced by a sequence of $(-1,2)$ terms. Since the algorithm cannot replace two consecutive partial quotients in this way, we must end up with at least one even partial quotient $a_j >k$. Hence almost every number has unbounded even partial quotients.
\end{proof}

\subsection{The linearization of the map $F_E$}
The formula proved in Theorem \ref{Thm1} and the continuity of $Q_E$ provide the formula
\begin{equation}\label{eq4}
Q_E ( [(1,2k_1),(e_1,2k_2),(e_2,2k_3),\ldots ]) =
-2\sum\limits_{n=1}^\infty \frac{(-e_1)\cdots (-e_n)}{3^{k_1+\cdots +k_n}} .
\end{equation}

Consider the continuous piecewise linear maps $\overline{F}_E, \overline{T}_E :[0,1]\rightarrow [0,1]$ defined by
\begin{equation*}
\overline{F}_E (y) =\begin{cases} 3y & \mbox{\rm if $y\in [0,\frac{1}{3}]$} \\
2-3y & \mbox{\rm if $y\in [\frac{1}{3},\frac{2}{3}]$} \\
3y-2 & \mbox{\rm if $y\in [\frac{2}{3},1]$}\end{cases} \quad \mbox{\rm and} \quad
\overline{T}_E (y) =\begin{cases} 2-3^k y & \mbox{\rm if $y\in [3^{-k}, 2\cdot 3^{-k}]$} \\
3^k y -2 & \mbox{\rm if $y\in [2\cdot 3^{-k},3^{-k+1}].$}
\end{cases}
\end{equation*}

\begin{proposition}\label{prop5}
The homeomorphism $Q_E$ of $[0,1]$ linearizes the maps $F_E$ and $T_E$ as follows:
\begin{equation*}
\mbox{\rm (i)}\  Q_E F_E Q_E^{-1} = \overline{F}_E,\qquad
\mbox{\rm (ii)}\  Q_E T_E Q_E^{-1} = \overline{T}_E.
\end{equation*}
\end{proposition}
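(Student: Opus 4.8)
The plan is to verify the two conjugation identities on a dense subset of $[0,1]$ using the explicit series \eqref{eq4}, and then pass to the whole interval by continuity. Since $Q_E$ is continuous (Theorem~\ref{evencont}), strictly increasing, and maps $[0,1]$ onto $[0,1]$ (both noted right after Theorem~\ref{Thm1}), it is a homeomorphism; as $F_E,T_E,\overline F_E,\overline T_E$ are continuous, it suffices to check $Q_E F_E=\overline F_E Q_E$ and $Q_E T_E=\overline T_E Q_E$ pointwise on the dense set $D$ of points of $[0,1]$ possessing an infinite ECF expansion (note $F_E(D)\subseteq D$ and $T_E(D)\subseteq D$, so \eqref{eq4} is available for every point appearing below). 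The engine of the proof is a single algebraic identity: for $x=[(1,2k_1),(e_1,2k_2),(e_2,2k_3),\dots]\in D$ we have $T_E(x)=[(1,2k_2),(e_2,2k_3),\dots]$, and pulling the first term of \eqref{eq4} out in front yields
\[
Q_E(x)=\frac{2}{3^{k_1}}-\frac{e_1}{3^{k_1}}\,Q_E\bigl(T_E(x)\bigr),
\qquad\text{i.e.}\qquad
Q_E\bigl(T_E(x)\bigr)=e_1\bigl(2-3^{k_1}Q_E(x)\bigr).
\]

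To finish (ii) I would identify the relevant branch of $\overline T_E$. The point $x$ lies in $\bigl[\tfrac{1}{2k_1+1},\tfrac{1}{2k_1-1}\bigr]$; using $Q_E(\tfrac{1}{2k+1})=3^{-k}$ and $Q_E(\tfrac{1}{2k})=2\cdot3^{-k}$ (the values recorded before Theorem~\ref{Thm1}) together with monotonicity, $Q_E$ carries this interval onto $[3^{-k_1},3^{-k_1+1}]$ with interior breakpoint $Q_E(\tfrac{1}{2k_1})=2\cdot3^{-k_1}$. Writing $1/x=2k_1+e_1 u$ with $u=[(1,2k_2),\dots]\in(0,1]$ shows $e_1=1\iff x<\tfrac{1}{2k_1}\iff Q_E(x)\in[3^{-k_1},2\cdot3^{-k_1}]$, and $e_1=-1\iff Q_E(x)\in[2\cdot3^{-k_1},3^{-k_1+1}]$. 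Comparing with the definition of $\overline T_E$, the displayed identity reads exactly $Q_E(T_E(x))=\overline T_E(Q_E(x))$ in both cases, so (ii) holds on $D$ and hence on $[0,1]$.

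For (i) I would first note that $F_E$ and $T_E$ coincide on $[\tfrac13,1]$ (on each of $[\tfrac13,\tfrac12]$ and $[\tfrac12,1]$ both equal $|\tfrac1x-2|$), and that $\overline F_E=\overline T_E$ on $[\tfrac13,1]=[3^{-1},3^{0}]$; since $Q_E$ maps $[\tfrac13,1]$ onto itself, (i) on $[\tfrac13,1]$ is immediate from (ii). For $x\in[0,\tfrac13]$, i.e.\ $k_1\ge2$, one has $F_E(x)=[(1,2(k_1-1)),(e_1,2k_2),(e_2,2k_3),\dots]$; decrementing $k_1$ divides every denominator in \eqref{eq4} by $3$, hence multiplies the whole series by $3$, giving $Q_E(F_E(x))=3\,Q_E(x)$; since $Q_E$ maps $[0,\tfrac13]$ onto $[0,\tfrac13]$, where $\overline F_E(y)=3y$, this is the required identity. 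Thus (i) holds on $D$, and then on $[0,1]$ by continuity.

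The computations here are light; the points demanding care are the bookkeeping that matches the ECF data $(k_1,e_1)$ to the correct linear branch of $\overline T_E$ and $\overline F_E$, the behaviour at interval endpoints (harmless, since there the two adjacent branches agree), and the reasoning that makes the reduction to $D$ legitimate --- namely that $D$ is dense, that $F_E$ and $T_E$ preserve $D$, and that the continuity of all four maps together with $Q_E$ being a homeomorphism propagates an identity from $D$ to all of $[0,1]$.
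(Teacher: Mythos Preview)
Your proof is correct and proceeds, like the paper's, by direct computation from the series formula \eqref{eq4}. The only difference is organizational: you establish (ii) first via the single identity $Q_E(T_E(x))=e_1\bigl(2-3^{k_1}Q_E(x)\bigr)$ and then deduce (i) on $[\tfrac13,1]$ from it, whereas the paper proves (i) first by three explicit cases and then remarks that (ii) follows either by the analogous two-case check or from (i) via a first-return-map argument.
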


\begin{proof}
Let $x=[(1,2k_1),(e_1,2k_2),(e_2,2k_3),\ldots ] \in (0,1)$ and employ repeatedly formula \eqref{eq4}.

(i) There are three cases to be considered:

\emph{Case 1.} $x\in (\frac{1}{2},1)$, where $k_1=1$ and $e_1=-1$. Then we successively infer
\begin{equation*}
\begin{split}
Q_E (x) & =Q_E \big( [(1,2),(-1,2k_2),(e_2,2k_3),\ldots ]\big) =
2\bigg( \frac{1}{3}+\frac{1}{3^{k_2+1}} -\frac{e_2}{3^{k_2+k_3+1}} + \cdots \bigg),\\
(\overline{F}_E Q_E)(x) & = 3Q_E (x)-2 =
2\bigg( \frac{1}{3^{k_2}} -\frac{e_2}{3^{k_2+k_3}} +\frac{e_2e_3}{3^{k_2+k_3+k_4}} -\cdots \bigg) , \\
(Q_E F_E)(x) & = Q_E \big( [(1,2k_2),(e_2,2k_3),(e_3,2k_4),\ldots ]\big) \\
& =2\bigg( \frac{1}{3^{k_2}}-\frac{e_2}{3^{k_2+k_3}} +\frac{e_2e_3}{3^{k_2+k_3+k_4}} -\cdots \bigg)
=(\overline{F}_E Q_E)(x).
\end{split}
\end{equation*}

\emph{Case 2.} $x\in (\frac{1}{3},\frac{1}{2})$, where $k_1=1$, $e_1=1$, and we have
\begin{equation*}
\begin{split}
Q_E (x) & =Q_E \big( [(1,2),(1,2k_2),(e_2,2k_3),\ldots ]\big) =
2\bigg( \frac{1}{3} -\frac{1}{3^{k_2+1}}+\frac{e_2}{3^{k_2+k_3+1}} -\cdots \bigg) , \\
(\overline{F}_E Q_E)(x) & =2-3Q_E(x) =2\bigg( \frac{1}{3^{k_2}} -
\frac{e_2}{3^{k_2+k_3}} +\frac{e_2 e_3}{3^{k_2+k_3+k_4}} -\cdots \bigg)
=(Q_E F_E)(x).
\end{split}
\end{equation*}

\emph{Case 3.} $x\in (0,\frac{1}{3})$, where $k_1 \geq 2$ and we have
\begin{equation*}
\begin{split}
Q_E(x) & =2\bigg( \frac{1}{3^{k_1}} -\frac{e_1}{3^{k_1+k_2}} +\frac{e_1e_2}{3^{k_1+k_2+k_3}} -\cdots \bigg) ,\\
(\overline{F}_E Q_E)(x) & =3 Q_E(x) =2\bigg( \frac{1}{3^{k_1-1}} -\frac{e_1}{3^{k_1-1+k_2}} +
\frac{e_1e_2}{3^{k_1-1+k_2+k_3}} -\cdots \bigg) ,\\
(Q_E F_E)(x) & = Q_E \big( [(1,2k_1-2),(e_1,2k_2),(e_2,2k_3),\ldots ]\big) \\
& =2 \bigg( \frac{1}{3^{k_1-1}} -\frac{e_1}{3^{k_1-1+k_2}} +\frac{e_1e_2}{3^{k_1-1+k_2+k_3}} - \cdots \bigg)
=(\overline{F}_E Q_E)(x).
\end{split}
\end{equation*}

(ii) follows by direct verification along the line of (i), considering the cases
$x\in ( \frac{1}{2k+1},\frac{1}{2k})$ where $k_1=k$, $e_1=1$ and $Q_E (x)\in [3^{-k},2\cdot 3^{-k}]$,
and respectively $x\in (\frac{1}{2k},\frac{1}{2k-1})$ where $k_1=k$, $e_1=-1$ and
$Q_E(x)\in [2\cdot 3^{-k}, 3^{-k+1}]$.

As suggested by one of the referees, (ii) can also be directly deduced from (i) by a dynamic argument,
since $T_E$ and $\overline{T}_E$ are conjugated to the first return map of $F_E$
and respectively $\overline{F}_E$ on $[\frac{1}{3},1]$, and $Q_E$ maps $[\frac{1}{3},1]$ onto $[\frac{1}{3},1]$.
\end{proof}

\subsection{The ECF Stern Sequence and Stern Polynomials}
We now consider the integer sequence of denominators of the fractions in our analogue $\DD_E$ of the Stern-Brocot array, giving an ECF version of the Stern sequence (A002487 in \cite{OEIS}).
As we will see, this ends up being closely related to a triadic version of the Stern sequence that has been constructed by Northshield in \cite{Nor}.
It is convenient to work on $[-1,1)$, since $|\{x = [(e_1,a_1), \ldots, (e_n,a_n)] \in \Q \cap [-1,1) : \sum_{i=1}^n a_i \leq 2k +1 \}| = 2 \cdot 3^k$, so $n \mapsto 3n$ corresponds to moving down a level in the extension of the diagram $\DD_E$ to $[-1,1)$. Let $\{\beta_n\}$ be the sequence of the denominators of the fractions in the extension of $\mathcal{D}_E$ to $[-1,1)$, reading each row from left to right. From the structure of $\DD_E$, we obtain the relations
\begin{equation*}
\beta_{3n} = \beta_n, \qquad
\beta_{3n+1} = w(n)\beta_n + \beta_{n+1}, \qquad
\beta_{3n+2} = \beta_n + w(n+1)\beta_{n+1}, \\
\end{equation*}
where $w(n) =2$ if $n$ is even and $1$ if $n$ is odd. We let $\beta_0 =0$, and observe that our $\{\beta_n\} = 	0, 1, 1, 1, 2, 3, 1, 3, 2, 1, 3, 5, 2, 7,\ldots$
is the sequence A277750 in \cite{OEIS}. From the above relations we derive
\begin{equation*}
\begin{split}
B_o(x) := & \sum_{n \text{ odd }} \beta_n x^n = \sum_{n \text{ odd }} \beta_{3n} x^{3n} +\sum_{n \text{ even }}\beta_{3n+1} x^{3n+1} +\sum_{n \text{ odd }} \beta_{3n+2} x^{3n+2}\\
= & (x^{-2} +1 + x^{2})\sum_{n \text{ odd }} \beta_n x^{3n} + 2(x^{-1} +x)\sum_{n \text{ even }} \beta_n x^{3n} ,\qquad \mbox{\rm and}  \\
B_e(x) := & \sum_{n \text{ even }} \beta_n x^n = \sum_{n \text{ even }} \beta_{3n} x^{3n} +\sum_{n \text{ odd }}\beta_{3n+1} x^{3n+1} +\sum_{n \text{ even }} \beta_{3n+2} x^{3n+2}\\
= & (x^{-2} +1 + x^{2})\sum_{n \text{ even }} \beta_nx^{3n} + (x^{-1} +x)\sum_{n \text{ odd }} \beta_n x^{3n}.
\end{split}
\end{equation*}
Although we do not immediately obtain an infinite product form for the generating function (as in the case of the Stern sequence), we will see that this is possible
for a slight modification of our sequence. Rewriting the above in matrix form, we have
\[ \left( \begin{matrix}
B_o(x) \\
B_e(x) \end{matrix} \right) =\left( \begin{matrix}
x^{-2} +1 + x^{2} & 2(x^{-1} +x)  \\
x^{-1} +x & x^{-2} +1 + x^{2}  \end{matrix} \right)\left( \begin{matrix}
B_o(x^3) \\
B_e(x^3) \end{matrix} \right). \]
The matrix $\left( \begin{matrix}
x^{-2} +1 + x^{2} & 2(x^{-1} +x)  \\
x^{-1} +x & x^{-2} +1 + x^{2}  \end{matrix} \right)$ has an eigenvector $\left( \begin{matrix}
\sqrt{2} \\
1 \end{matrix}\right)$ with eigenvalue $(x^{-2} + \sqrt{2}x^{-1} +1 + \sqrt{2}x + x^2)$,
so we obtain the relation
\begin{equation*}
\sqrt{2}B_o(x) + B_e(x) = (x^{-2} + \sqrt{2}x^{-1} +1 + \sqrt{2}x + x^2)(\sqrt{2}B_o(x^3) + B_e(x^3)),
\end{equation*}
from which we obtain the infinite product representation
\begin{equation*}
\sqrt{2}B_o(x) + B_e(x) = \prod_{n=0}^\infty (x^{-2\cdot 3^n} + \sqrt{2}x^{-3^n} +1 + \sqrt{2}x^{3^n} + x^{2\cdot 3^n}).
\end{equation*}
The ``diagonalized" sequence obtained from $\{\beta_n\}$ by multiplying the odd terms by $\sqrt{2}$ is what Northshield denotes
$\{b_n\}$ in \cite{Nor}, where many properties of the sequence are proved, including an infinite product
representation in Section 4. Our $\{ \beta_n\}$ appear as the denominators of Northshield's $R_n$.

Dilcher and Stolarsky considered a polynomial version of the Stern sequence in \cite{DilSto}.
The ECF Stern sequence can be similarly generalized, by setting $\beta(0,x) =0$, $\beta(1,x) =1$, $\beta(2,x) =1$, and
\begin{align*}
\beta(3n,x) &= \beta(n,x^4),\\
\beta(3n+1,x) &= \begin{cases}
            (1+x)\beta(n,x^4) +x^3 \beta(n+1,x^4) & \mbox{\rm if $n$ is even } \\
            \beta(n,x^4) +x^2 \beta(n+1,x^4) & \mbox{\rm if $n$ is odd, }
        \end{cases}
   \\
\beta(3n+2,x) &= \begin{cases}
            \beta(n,x^4) +x^2 \beta(n+1,x^4) & \mbox{\rm if $n$ is even } \\
            \beta(n,x^4) +(x^2+x) \beta(n+1,x^4) & \mbox{\rm if $n$ is odd. }
       \end{cases} \\
\end{align*}
The above relations are derived from replacing the mediant construction with the polynomial version used by Dilcher and Stolarsky.
It is immediate from the definition that $\beta(n,1)$ recovers the ECF Stern sequence $\beta_n$, and that $\beta(n,x)$ has coefficients in $\{0,1\}$.
It would be interesting to find a combinatorial interpretation of the ECF Stern sequence or its polynomial generalization.

\section{Odd Partial Quotients}\label{OCF}
\subsection{Odd continued fraction and associated Gauss and Farey maps}
In this section we consider the OCF in $[-1,1]$ given by
\begin{equation}\label{eq5}
[(e_1, a_1), (e_2, a_2), (e_3, a_3), \ldots ] = \cfrac{e_1}{a_1+\cfrac{e_2}{a_2+\cfrac{e_3}{\ddots}}},
\end{equation}
where $e_i \in \{\pm1\}$, $a_i \in 2\N -1$, $e_1 = 1$, and $a_i + e_{i+1} > 0$. For uniqueness of representations, we require that
in a finite expansion, if the last $a_j =1$, then $e_j =1$. For example, we have
$\frac{1}{2k-1}=[(1,2k-1)]$, $\frac{1}{2k}=[(1,2k-1),(1,1)]$, $\frac{4}{7}=[(1,1),(1,1),(1,3)]$,
$\frac{7}{12}=[(1,1),(1,1),(1,3),(-1,1),(1,1)]$.

We consider the Farey type map $F_O:[0,1]\rightarrow [0,1]$ associated to OCF expansions, given by
\begin{equation}
F_O(x)= \begin{cases} \frac{x}{1-2x} & \mbox{\rm if $0\leq x < \frac{1}{3}$} \\
3-\frac{1}{x} & \mbox{\rm if $\frac{1}{3} \leq x \leq \frac{1}{2}$} \\
\frac{1}{x}-1 & \mbox{\rm if $\frac{1}{2} \leq x\leq 1.$}
\end{cases}
\end{equation}
Symbolically, $F_O$ acts on the OCF representation \eqref{eq5} by subtracting $2$ from the leading digit $a_1$ of $x$ when
$(a_1,e_2)\neq (3,-1)$ and $(a_1,e_1)\neq (1,1)$ (which correspond to $x$ between $0$ and $\frac{1}{3}$), and by removing $(a_1,e_2)$ when
$(a_1,e_2)\in \{ (3,-1),(1,1)\}$
(which corresponds to $x$ between $\frac{1}{3}$ and $1$), i.e.
\begin{equation*}
F_O ([(1,a_1),(e_2,a_2),(e_3,a_3),\ldots]) =\begin{cases}
[(1,a_1-2),(e_2,a_2),\ldots ] & \mbox{\rm if $(a_1,e_2)\notin \{ (3,-1),(1,1)\}$} \\
[(1,a_2),(e_3,a_3),(e_4,a_4),\ldots ] & \mbox{\rm if $(a_1,e_2)\in \{ (3,-1),(1,1)\}$.}
\end{cases}
\end{equation*}

\begin{center}
\begin{figure}
\includegraphics[scale=0.65,bb = 0 20 300 260]{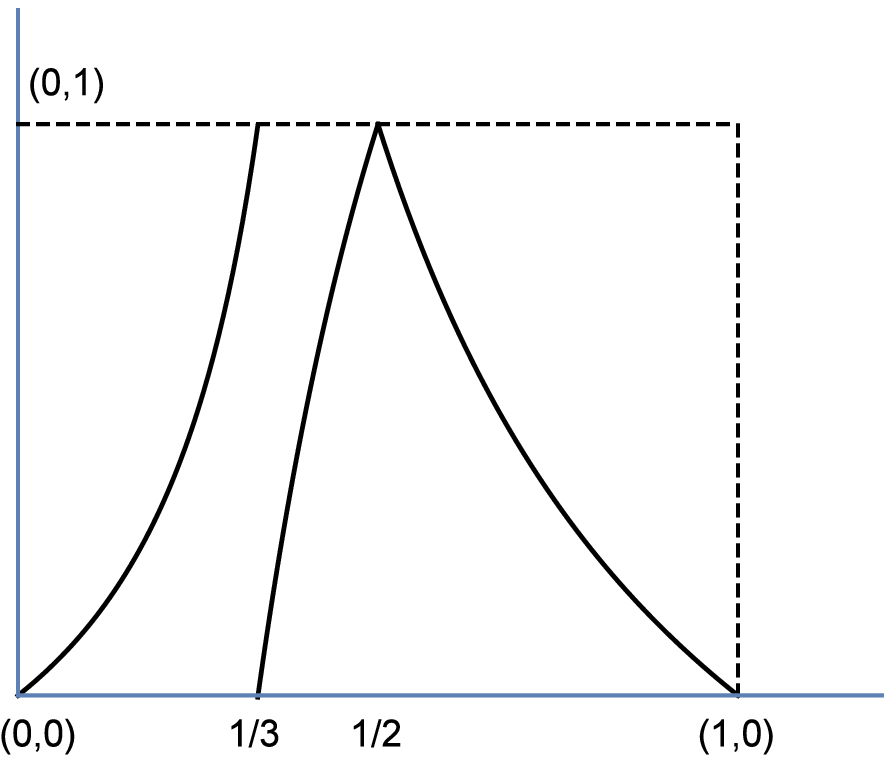}
\includegraphics[scale=0.65,bb = 0 20 200 260]{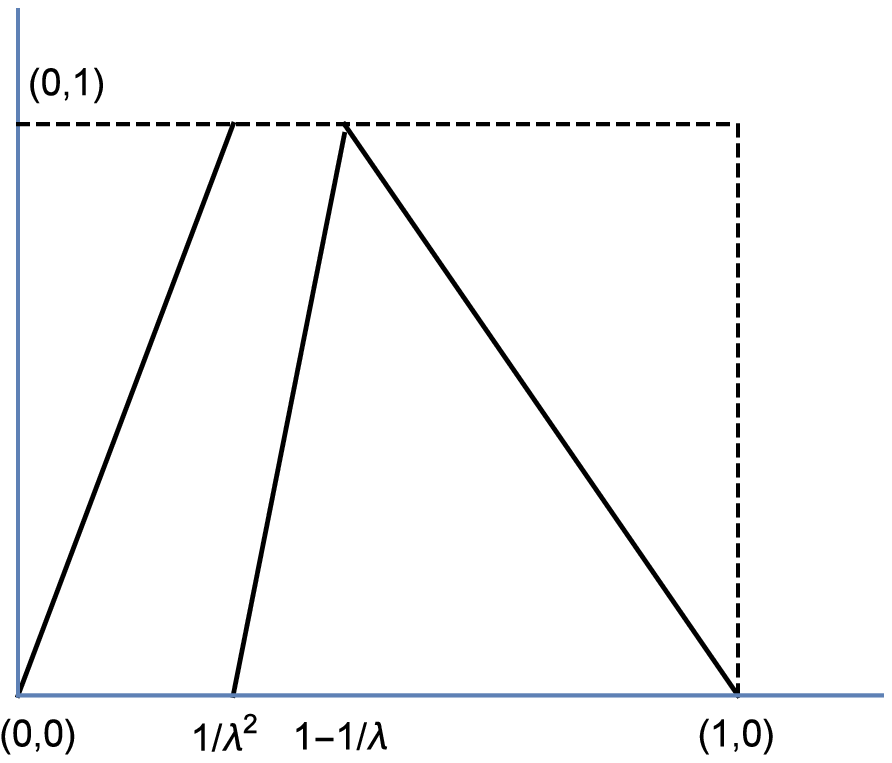}
\caption{The odd Farey map $F_O$ and its linearization $\overline{F}_O$}\label{Figure5}
\end{figure}
\end{center}

The following result follows from direct verification:

\begin{lemma}
The infinite measure $d\nu_O (x)=\frac{1}{x}+\frac{1}{G+1-x}$ is $F_O$-invariant.
\end{lemma}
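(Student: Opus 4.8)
The plan is to verify the Perron--Frobenius (transfer operator) identity for $F_O$, which for the nonnegative density $h(x)=\frac1x+\frac1{G+1-x}$ of $\nu_O$ is equivalent to $F_O$-invariance and reads
\begin{equation*}
\sum_{F_O(x)=y}\frac{h(x)}{\lvert F_O'(x)\rvert}=h(y)\qquad\text{for a.e. }y\in[0,1].
\end{equation*}
First I would record the three monotone inverse branches of $F_O$: on $[0,\tfrac13)$ the branch $x\mapsto\frac{x}{1-2x}$ has inverse $\psi_1(y)=\frac{y}{1+2y}$; on $[\tfrac13,\tfrac12]$ the branch $x\mapsto 3-\frac1x$ has inverse $\psi_2(y)=\frac1{3-y}$; on $[\tfrac12,1]$ the branch $x\mapsto\frac1x-1$ has inverse $\psi_3(y)=\frac1{1+y}$. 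Each maps onto $[0,1]$ up to a null set, so almost every $y$ has exactly these three preimages, and $\lvert\psi_1'(y)\rvert=(1+2y)^{-2}$, $\lvert\psi_2'(y)\rvert=(3-y)^{-2}$, $\lvert\psi_3'(y)\rvert=(1+y)^{-2}$. Hence the identity to establish is
\begin{equation*}
\frac{h(\psi_1(y))}{(1+2y)^2}+\frac{h(\psi_2(y))}{(3-y)^2}+\frac{h(\psi_3(y))}{(1+y)^2}=h(y).
\end{equation*}

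Next I would substitute and simplify each summand, the crucial input being the relation $G^2=G+1$ (equivalently $G^2-1=G$), which controls the ``$\frac1{G+1-x}$'' part. One computes $h(\psi_1(y))=(1+2y)\big(\frac1y+\frac1{G^2+(2G^2-1)y}\big)$, $h(\psi_2(y))=(3-y)\big(1+\frac1{3G^2-1-G^2y}\big)$, and $h(\psi_3(y))=(1+y)\big(1+\frac1{G(1+Gy)}\big)$, where the last uses $G^2(1+y)-1=G+G^2y=G(1+Gy)$. Dividing by the respective $(\cdot)^2$ and splitting off $\frac1{y(1+2y)}=\frac1y-\frac2{1+2y}$ isolates the term $\frac1y$; it then remains to check that the six leftover rational terms sum to $\frac1{G^2-y}$. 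Clearing denominators turns this into a polynomial identity in $y$ with coefficients in $\Z[G]$, which closes upon repeatedly reducing powers of $G$ via $G^2=G+1$.

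The only real difficulty is bookkeeping: combining six rational functions with coefficients involving $G$ and applying the reductions $G^2=G+1$ consistently; there is no conceptual obstacle once the transfer-operator equation is set up. I would include a numerical sanity check at a convenient value such as $y=\tfrac12$ (where all six leftover terms are elementary to evaluate) as a guard against arithmetic slips. Finally, since the branch domains overlap only on the null set $\{\tfrac13,\tfrac12\}$, the almost-everywhere identity above is exactly the assertion that $\nu_O$ is $F_O$-invariant.
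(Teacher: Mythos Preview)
Your proposal is correct: setting up the Perron--Frobenius identity for the three inverse branches $\psi_1,\psi_2,\psi_3$ and reducing via $G^2=G+1$ is precisely the ``direct verification'' the paper invokes without writing out. The paper gives no details beyond that phrase, so your sketch is in fact more explicit than the paper's own treatment; the only step you defer (clearing denominators in the six-term rational identity) is routine algebra in $\Z[G]$ and your numerical check at $y=\tfrac12$ confirms there is no hidden obstruction.
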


The first return map $R_O$ of $F_O$ on $[\frac{1}{3},1)$ acts on the OCF expansion as
\begin{equation*}
R_O\big( [(1,a_1),(e_2,a_2),(e_3,a_3),\ldots ]\big) =[(1,a_1),(e_3,a_3),(e_4,a_4),\ldots] ,
\end{equation*}
where $(a_1,e_2)\in \{ (3,-1),(1,1)\}$.
Recall that the OCF Gauss map $T_O$ acts on $[0,1]$ by $T_O(0)=0$ and
\begin{equation*}
T_O (x)= \bigg| \frac{1}{x}-2\bigg[ \frac{1}{2x}\bigg] -1 \bigg| =
\begin{cases}
2k+1- \frac{1}{x} & \mbox{\rm if $x\in [\frac{1}{2k+1},\frac{1}{2k}]$} \\
\frac{1}{x} -(2k-1) & \mbox{\rm if $x\in [\frac{1}{2k},\frac{1}{2k-1}]$,}
\end{cases}
\end{equation*}
and it acts on OCF expansions \eqref{eq5} restricted to $(0,1)$ by
\begin{equation*}
T_O ( [(1,a_1),(e_2,a_2),(e_3,a_3),\ldots ]) =
[(1,a_2),(e_3,a_3),(e_4,a_4),\ldots ].
\end{equation*}
Recall also that $d\mu_O (x) =( \frac{1}{G-1+x}+\frac{1}{G+1-x})dx$ is a finite $T_O$-invariant measure \cite{Sch1}.

We will instead consider the extended OCF Gauss map $\wT_O :[-1,1) \rightarrow [-1,1)$ acting on
the OCF expansion \eqref{eq5} as
\begin{equation*}
\wT_O ( [(e_1,a_1),(e_2,a_2),(e_3,a_3),\ldots ]) =[(e_2,a_2),(e_3,a_3),(e_4,a_4),\ldots ],
\end{equation*}
or equivalently we can take $\wT_O (0)=0$ and
\begin{equation*}
\wT_O (x)= \frac{1}{\lvert x\rvert} - 2 \bigg[ \frac{1}{2\lvert x\rvert} \bigg] -1
\quad \mbox{\rm if $x\neq 0$.}
\end{equation*}
It is plain that $R_O$ is conjugated with $\wT_O$, and more precisely
$\wT_O=\psi R_O \psi^{-1}$, where
$\psi:[\frac{1}{3},1) \rightarrow [-1,1)$ is the invertible map given by
\begin{equation*}
\psi (x)=\begin{cases}
\frac{1}{x}- 3 & \mbox{\rm if $x\in [\frac{1}{3},\frac{1}{2}]$} \\
\frac{1}{x}-1 & \mbox{\rm if $x\in (\frac{1}{2},1]$}
\end{cases} \quad \mbox{\rm with} \quad
\psi^{-1} (y) =\begin{cases} \frac{1}{3+y} & \mbox{\rm if $y\in [-1,0]$} \\
\frac{1}{1+y} & \mbox{\rm if $y\in (0,1)$.}
\end{cases}
\end{equation*}
The push-forward measure $\widetilde{\mu}_O$ of $\nu_O\vert_{[1/3,1)}$ by $\psi$
is $\wT_O$-invariant, being given by
\begin{equation*}
\begin{split}
\int_{1/3}^1 f\big( \psi (x)\big) \, \frac{dx}{x(G+1-x)} & =\int_{1/3}^{1/2} f\bigg( \frac{1}{x}-3\bigg) \frac{dx}{x(G+1-x)}
+\int_{1/2}^1 f\bigg( \frac{1}{x}-1\bigg) \frac{dx}{x(G+1-x)} \\
& = \int_{-1}^1 f(y) \, d\widetilde{\mu}_O (y),
\end{split}
\end{equation*}
that is
\begin{equation*}
d \widetilde{\mu}_O (y) = \frac{1}{G^2} \cdot \frac{dy}{y+G+1}\, \chi_{[-1,0]} +\frac{1}{G^2} \cdot \frac{dy}{y+G-1}\, \chi_{(0,1)} .
\end{equation*}
Again, $\wT_O$ is an extension of $T_O$ with
$\pi \wT_O=T_O \pi$, where
$\pi (x)=\lvert x\rvert$. The push-forward of $\widetilde{\mu}_O$ under $\pi$ is
the $T_O$-invariant measure $\mu_O$.
The map $\wT_O$ coincides with the map $T$ introduced and investigated by
Rieger in Chapters 2 and 3 of \cite{Rie}. Note also that $\rho = G \widetilde{\mu}_O$
is the $T$-invariant measure considered in \cite[Theorem 6.1]{Rie}.

\subsection{The odd Minkowski type question mark function $Q_O$}

Let $\lambda >1$ be the unique real root of $x^3 -x^2 -x -1=0$. Following \cite{Zha} and \eqref{eq5},
we consider the map $Q_O$ on $[0,1]$ by
\begin{equation}\label{eq6}
Q_O([(e_1,a_1),(e_2,a_2),(e_3,a_3),\ldots]) =  -\sum_{k=1}^{\infty} \frac{(-e_1)\cdots (-e_k) }{\lambda^{a_1+\cdots+a_k -1}},
\end{equation}
which coincides with Zhabitskaya's $F^0(x)$. Note that in the rational case we have a finite expansion
$[(e_1,a_1),(e_2,a_2),(e_3,a_3),\ldots ,(e_n,a_n)]$, and the above formula holds for the finite sum.
For example, we have $Q_O(\frac{1}{2k-1})=\lambda^{-2k+2}$, $Q_O(\frac{1}{2k})=\lambda^{-2k+2}-\lambda^{-2k+1}$,
$Q_O(\frac{4}{7})=1-\lambda^{-1}+\lambda^{-4}$,
$Q_O (\frac{7}{12}) = 1-\lambda^{-1}+\lambda^{-4}+\lambda^{-5}-\lambda^{-6}$.

\begin{theorem}\label{oddcont}
$Q_O$ is is H\"older continuous, with best exponent $\frac{\log \lambda}{2 \log G}$.
\end{theorem}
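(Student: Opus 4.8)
The plan is to mimic the proof of Theorem \ref{evencont}, replacing the even continuant bound by an analogous bound for OCF continuants. First I would establish a growth estimate for the OCF denominators: if $\frac{p_n}{q_n} = [(1,a_1),(e_2,a_2),\ldots,(e_n,a_n)]$ is the $n$-th OCF convergent, then $q_n \leq C\,G^{a_1+\cdots+a_n}$ for an absolute constant $C$ (or more precisely something like $q_n < G^{a_1+\cdots+a_n - n + 1}$ up to a harmless constant). This follows from the recurrence $q_k = a_k q_{k-1} + e_k q_{k-2}$ together with $|e_k| = 1$, $a_k \geq 1$, and the admissibility condition $a_k + e_{k+1} > 0$ (so a digit $a_k = 1$ cannot be followed by $e_{k+1} = -1$); one argues by induction exactly as in Proposition \ref{evendenom}, the worst case being all $a_i = 1$, which makes $q_k$ grow like the Fibonacci numbers, i.e. like $G^k = G^{a_1+\cdots+a_k}$. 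One should record separately, as in the even case, that this exponent is sharp, witnessed by the golden-ratio-type number $[(1,1),(1,1),(1,1),\ldots]$.

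Next I would prove the upper H\"older bound. Given $x < x'$ in $\mathbb{Q}\cap[0,1]$, take the first level $k$ of the OCF Stern-Brocot array $\YY_k$ (the sets defined via $F_O$, analogous to \eqref{eq2}) at which there exist $r < r'$ in $\YY_k$ with $x \le r < r' \le x'$. Two ingredients are needed: (a) the denominators of elements of $\YY_k$ are bounded by roughly $G^{2k}$ (from the continuant bound and the description of $\YY_k$ as rationals whose OCF digit sum is at most about $2k$), so that $x' - x \geq r' - r \gtrsim G^{-4k}$; and (b) there are only boundedly many elements of $\YY_k$ strictly between $x$ and $x'$ (in the even case this bound was $5$; here one must read off from the OCF array the analogous combinatorial constant), so that $Q_O(x') - Q_O(x) \lesssim \lambda^{-k}$ using that consecutive values of $Q_O$ on $\YY_k$ differ by a controlled amount governed by $\lambda^{-k}$. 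Combining (a) and (b) gives $Q_O(x') - Q_O(x) \lesssim (x'-x)^{\log\lambda / (2\log G)}$, and the inequality extends to all of $[0,1]$ by continuity/density of $Q_O(\mathbb{Q}\cap[0,1])$. I would also need the elementary observation, parallel to the discussion preceding Theorem \ref{evencont}, that the series \eqref{eq6} converges uniformly (each admissible tail is bounded in absolute value by $\sum_k \lambda^{-(2k-1)}$ or similar), so that $Q_O$ is genuinely continuous and the finite formula matches the infinite one on rationals with two representations.

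For sharpness of the exponent, I would take $x = [(1,1),(1,1),(1,1),\ldots]$ (the fixed point of $T_O$ of smallest type), with convergents $\frac{p_k}{q_k}$. Here $Q_O(x) = \sum_{j\ge 1} (-1)^{j+1}\lambda^{-(j-1)}$ and $Q_O(\frac{p_k}{q_k})$ is the $k$-th partial sum, so $|Q_O(x) - Q_O(\frac{p_k}{q_k})|$ is of exact order $\lambda^{-k}$; meanwhile $|x - \frac{p_k}{q_k}| < q_k^{-2}$ and $q_k \asymp G^k$, so $|x - \frac{p_k}{q_k}|^{\log\lambda/(2\log G)} \lesssim G^{-2k\cdot \log\lambda/(2\log G)} = \lambda^{-k}$, matching the order of $|Q_O(x)-Q_O(\frac{p_k}{q_k})|$ from below; this forbids any larger H\"older exponent.

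The main obstacle I anticipate is getting the OCF continuant bound clean and sharp: unlike the ECF case, the admissibility constraint $a_i + e_{i+1} > 0$ couples consecutive digits, and the digit $1$ is allowed, so the naive induction with $\theta = 1+\sqrt2$ is replaced by one with base $G$, and the base cases of the induction (distinguishing $a_k = 1$ possibly followed by forced $e_{k+1}=+1$, versus $a_k \ge 3$) must be checked carefully to ensure no loss beyond a bounded multiplicative constant. The second, lesser obstacle is pinning down the exact combinatorial constant bounding the number of $\YY_k$-elements between $x$ and $x'$ — this requires making the OCF Stern-Brocot construction explicit in the same way Subsection 2.2 does for the even case — but since only boundedness (not the precise value) matters for the H\"older exponent, a crude bound suffices here.
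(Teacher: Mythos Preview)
Your overall strategy matches the paper's, and your treatment of the continuant bound (the paper's Proposition~\ref{eq8}) and of sharpness via $G-1=[(1,1),(1,1),\ldots]$ is correct. However, there is a genuine gap in step (b).

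You assert that ``consecutive values of $Q_O$ on $\YY_k$ differ by a controlled amount governed by $\lambda^{-k}$'' as if this were automatic. In the even case it \emph{is} automatic, because $Q_E$ was defined by counting, so that $Q_E(\YY_k)=\{m/3^k:0\le m\le 3^k\}$ is an evenly spaced grid and the gap is exactly $3^{-k}$. In the odd case $Q_O$ is defined by the series \eqref{eq6}, the set $Q_O(\YY_k)$ is \emph{not} evenly spaced (the tree in Figure~\ref{Figure6} branches irregularly, and $\lambda$ is irrational), and the claim that the largest gap is $O(\lambda^{-k})$ is precisely Proposition~\ref{Prop8} of the paper. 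Its proof is the longest piece of the whole argument: adjacent elements $x,y\in\YY_n$ need not share a long common OCF prefix, so one cannot simply bound $|Q_O(x)-Q_O(y)|$ by a tail of \eqref{eq6}. Instead one classifies how $y$ descends from $x$ in Zhabitskaya's tree (three types of ``moves'', then three cases according to which type comes first), and in the third case the bound only comes out because of the identity $1-\lambda^{-1}-\lambda^{-2}-\lambda^{-3}=0$. Thus what you flag as the ``second, lesser obstacle'' --- making the OCF array explicit --- is needed not merely to count how many $\YY_k$-elements fall between $x$ and $x'$, but to obtain the $\lambda^{-k}$ rate at all; and it is the precise rate, not mere boundedness, that fixes the H\"older exponent.

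Two smaller points. First, the paper defines $\YY_n$ in the odd case by digit sum $\le n+1$ (equation \eqref{eq7}), \emph{not} as $F_O^{-n}(\{0,1\})$; it explicitly notes that the analogue of \eqref{eq2} fails here since $\frac{1}{3}\in F_O^{-1}(\{0,1\})\setminus\YY_1$. With the paper's indexing the denominator bound is $q\le G^{n+2}$, giving $x'-x\gtrsim G^{-2k}$ and matching against $\lambda^{-k}$; your version (digit sum $\approx 2k$, hence $x'-x\gtrsim G^{-4k}$) would also work once you correspondingly replace $\lambda^{-k}$ by $\lambda^{-2k}$ in (b), but as written your exponents in (a) and (b) are inconsistent.
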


In preparation for this result, we need two preliminary facts about the (ordered) set
\begin{equation}\label{eq7}
\mathcal{Y}_n := \big\{ x \in \mathbb{Q} \cap [0,1] : x = [(e_1, a_1), (e_2, a_2), \ldots ,(e_k, a_k)] \text{ and } a_1+\cdots +a_k \leq n +1 \big\}.
\end{equation}
In this section we use the same notation $\mathcal{Y}_n$ and $\mathcal{X}_n$ as in \cite{Zha}. Note that
the analogue of formula \eqref{eq2} does not hold for $\YY_n$ here because $F_O^{-1}(\{ 0,1\}) \setminus \YY_1 =\{ \frac{1}{3}\}$.

What we need will follow from the structure of the analogue of the Stern-Brocot tree for odd continued fractions, which we denote $\mathcal{D}$, as in \cite{Zha}.

\begin{proposition}\label{eq8}
For a reduced fraction $\frac{p}{q} \in \mathcal{Y}_n$,
$$q \leq G^{n+2}.$$
\end{proposition}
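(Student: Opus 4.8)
The plan is to bound the denominators $q_k$ of the OCF convergents $\frac{p_k}{q_k} = [(e_1,a_1),\ldots,(e_k,a_k)]$ by $G^{a_1+\cdots+a_k+1}$ (so that taking $a_1+\cdots+a_k \le n+1$ yields $q \le G^{n+2}$) by induction on $k$, just as in Proposition \ref{evendenom}. The recurrence for OCF continuants is $q_k = a_k q_{k-1} + e_k q_{k-2}$, with base cases $q_0 = 1 \le G^1$ and $q_1 = a_1 \le G^{a_1+1}$ (which holds since $a_1$ is a positive odd integer and $G^2 = G+1 > a_1$ when $a_1 \in \{1,3\}$, while for larger odd $a_1$ one checks $G^{x+1} - x$ is increasing). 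Assuming $q_{k-1} \le G^{a_1+\cdots+a_{k-1}+1}$ and $q_{k-2} \le G^{a_1+\cdots+a_{k-2}+1}$, we get
\begin{equation*}
a_k q_{k-1} + e_k q_{k-2} \le a_k G^{a_1+\cdots+a_{k-1}+1} + G^{a_1+\cdots+a_{k-2}+1},
\end{equation*}
so it suffices to show $a_k G^{a_{k-1}} + 1 \le G^{a_{k-1}+a_k}$, i.e. $a_k + G^{-a_{k-1}} \le G^{a_k}$.

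The main subtlety — and the reason this is not a verbatim copy of the even case — is the admissibility condition $a_i + e_{i+1} > 0$ in the OCF: since $a_{k-1}$ can equal $1$ (unlike in the ECF, where all $a_i \ge 2$), we only get $G^{-a_{k-1}} \le G^{-1}$, not $G^{-2}$. So I would reduce to proving $a_k + G^{-1} \le G^{a_k}$ for all odd positive integers $a_k$. For $a_k = 1$ this reads $1 + G^{-1} = 1 + G - 1 \cdot \text{(wait)}$ — more carefully, $G^{-1} = G - 1$, so $1 + G^{-1} = G < G = G^{a_k}$ fails equality but one checks $1 + (G-1) = G$, so in fact $a_k=1$ gives equality $1 + G^{-1} = G^1$ exactly. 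Hmm, that is the delicate point: when $a_{k-1}=1$ and $a_k = 1$ we get equality rather than strict inequality, which still suffices for the non-strict bound $q \le G^{n+2}$. For $a_k \ge 3$ I would note $G^{x}-x$ has derivative $G^x \log G - 1 > 0$ for $x \ge 1$ (since $G \log G > 1$), so it is increasing, and combined with the $a_k=1$ computation the inequality holds throughout.

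The one genuinely delicate step to watch is whether the $a_{k-1}=1, a_k=1$ case (giving equality) propagates in a way that could accumulate and break the strict parts of the argument used later in the Hölder estimate — but since we only claim the non-strict bound $q \le G^{n+2}$ here, the chain of inequalities $q_k \le a_k q_{k-1} + e_k q_{k-2} \le \cdots \le G^{a_1+\cdots+a_k+1}$ goes through with $\le$ at every stage. I would also remark, paralleling the even case, that this exponent is optimal: it is attained (up to a constant) by the convergents of the OCF-fixed point $[(1,1),(1,1),(1,1),\ldots]$, whose continuants satisfy $q_k = q_{k-1}+q_{k-2}$ and hence grow like $G^k$, matching $G^{a_1+\cdots+a_k} = G^k$. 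So no step is a serious obstacle; the only thing requiring care is the handling of $a_i = 1$ via the admissibility constraint $a_i + e_{i+1} > 0$, which forces $e_{i+1} = 1$ when $a_i = 1$ and keeps the continuants positive and increasing enough for the induction to close.
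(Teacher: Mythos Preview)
Your approach is correct and genuinely different from the paper's. The paper argues via the mediant structure of Zhabitskaya's tree $\mathcal{D}$: since every element of $\mathcal{X}_{n+1}$ is a mediant of two adjacent elements of $\mathcal{Y}_n$, and no two elements of $\mathcal{X}_{n+1}$ are adjacent in $\mathcal{Y}_{n+1}$, the maximal denominator in $\mathcal{Y}_n$ satisfies the Fibonacci recurrence and hence equals $F_{n+2} \le G^{n+2}$. This gives the sharp constant and reuses combinatorial information needed anyway for Proposition~\ref{Prop8}. Your route instead mirrors Proposition~\ref{evendenom}, bounding continuants directly by $q_k \le G^{a_1+\cdots+a_k+1}$ via the recurrence $q_k = a_k q_{k-1} + e_k q_{k-2}$; this is more self-contained and makes the parallel between the ECF and OCF cases explicit, at the cost of not identifying the extremal denominators as Fibonacci numbers outright.

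One small correction: your claim that $G^x \log G - 1 > 0$ for $x \ge 1$ ``since $G \log G > 1$'' is false, as $G \log G \approx 0.78 < 1$. The derivative becomes positive only for $x \gtrsim 1.52$. This does not damage the argument: you have already verified the key inequality $a_k + G^{-1} \le G^{a_k}$ at $a_k = 1$ (with equality) and can check it directly at $a_k = 3$ (where $3 + G^{-1} \approx 3.62 < G^3 \approx 4.24$); for $x \ge 3$ the derivative $G^x \log G - 1$ is certainly positive since $G^3 \log G > 2$, so $G^x - x$ is increasing there and the inequality persists for all larger odd $a_k$. With that patch, your induction closes.
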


\begin{proof}
In fact, the largest denominator in $\mathcal{Y}_n$ is given by the $(n+2)$-th Fibonacci number. This can be directly verified for the first few $n$, and follows inductively from the fact that every element of $\mathcal{X}_{n+1} :=\mathcal{Y}_{n+1} \setminus \mathcal{Y}_n$ is the mediant of two adjacent elements of $\mathcal{Y}_n$. Since no two elements of $ \mathcal{X}_{n+1}$ are adjacent in $\mathcal{Y}_{n+1}$ (see \cite{Zha} page 9), the largest denominator in $\mathcal{Y}_{n+2}$ is at most the sum of the largest denominator in $\mathcal{Y}_{n+1}$ and the largest denominator in $\mathcal{Y}_{n}$. Since this recurrence relation is in fact satisfied by the
convergents of $G-1 =[(1,1), (1,1), (1,1),\ldots ]$, we obtain the stated (sharp) upper bound for the denominators.
\end{proof}

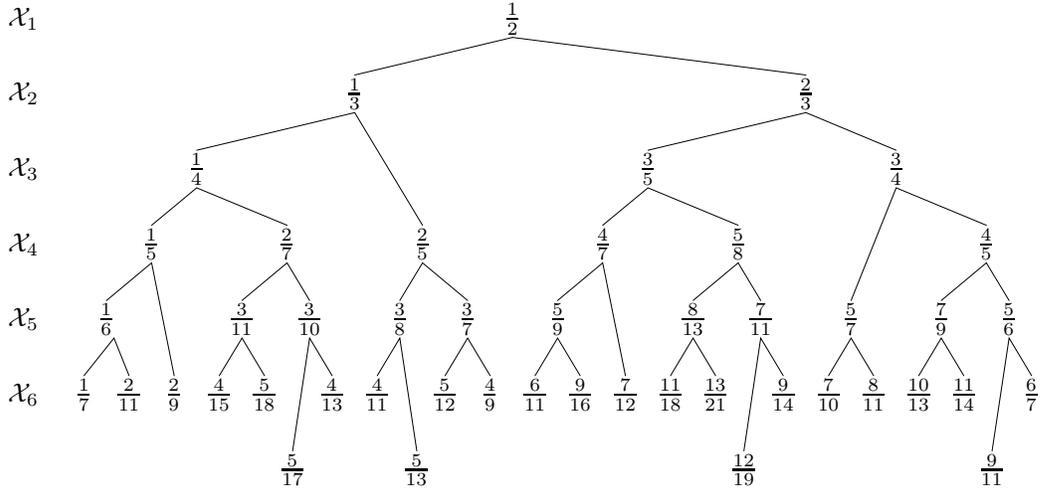
\begin{figure}[htb]
\begin{center}
\unitlength 0.5mm
\begin{picture}(30,120)(0,0)

\put(0,120){\makebox(0,0){{\small $\frac{1}{2}$}}}
\put(-42,100){\makebox(0,0){{\small $\frac{1}{3}$}}}
\put(78,100){\makebox(0,0){{\small $\frac{2}{3}$}}}

\path(-42,105)(0,115) \path(0,115)(78,105)

\put(-6,20){\makebox(0,0){{\small $\frac{4}{9}$}}}
\put(-12,40){\makebox(0,0){{\small $\frac{3}{7}$}}}
\put(-18,20){\makebox(0,0){{\small $\frac{5}{12}$}}}
\put(-24,60){\makebox(0,0){{\small $\frac{2}{5}$}}}
\put(-30,40){\makebox(0,0){{\small $\frac{3}{8}$}}}
\put(-25.5,0){\makebox(0,0){{\small $\frac{5}{13}$}}}
\put(-36,20){\makebox(0,0){{\small $\frac{4}{11}$}}}

\path(-30,45)(-24,55)(-12,45)
\path(-18,25)(-12,35)(-6,25)
\path(-36,25)(-30,35)(-25.5,5)
\path(-42,95)(-24,65)

\put(-48,20){\makebox(0,0){{\small $\frac{4}{13}$}}}
\put(-54,40){\makebox(0,0){{\small $\frac{3}{10}$}}}
\put(-58.5,0){\makebox(0,0){{\small $\frac{5}{17}$}}}
\put(-60,60){\makebox(0,0){{\small $\frac{2}{7}$}}}
\put(-66,20){\makebox(0,0){{\small $\frac{5}{18}$}}}
\put(-72,40){\makebox(0,0){{\small $\frac{3}{11}$}}}
\put(-78,20){\makebox(0,0){{\small $\frac{4}{15}$}}}
\put(-84,80){\makebox(0,0){{\small $\frac{1}{4}$}}}
\put(-90,20){\makebox(0,0){{\small $\frac{2}{9}$}}}
\put(-96,60){\makebox(0,0){{\small $\frac{1}{5}$}}}
\put(-102,20){\makebox(0,0){{\small $\frac{2}{11}$}}}
\put(-108,40){\makebox(0,0){{\small $\frac{1}{6}$}}}
\put(-114,20){\makebox(0,0){{\small $\frac{1}{7}$}}}

\path(-114,25)(-106,35)(-102,25)
\path(-108,45)(-96,55)(-90,25)
\path(-96,65)(-84,75)(-60,65)
\path(-72,45)(-60,55)(-54,45)
\path(-78,25)(-72,35)(-66,25)
\path(-48,25)(-54,35)(-58.5,5)
\path(-84,85)(-42,95)

\put(6,20){\makebox(0,0){{\small $\frac{6}{11}$}}}
\put(18,20){\makebox(0,0){{\small $\frac{9}{16}$}}}
\put(12,40){\makebox(0,0){{\small $\frac{5}{9}$}}}
\path(6,25)(12,35)(18,25)

\put(24,60){\makebox(0,0){{\small $\frac{4}{7}$}}}
\put(30,20){\makebox(0,0){{\small $\frac{7}{12}$}}}
\path(12,45)(24,55)(30,25)

\put(36,80){\makebox(0,0){{\small $\frac{3}{5}$}}}
\put(60,60){\makebox(0,0){{\small $\frac{5}{8}$}}}
\path(24,65)(36,75)(60,65)

\put(48,40){\makebox(0,0){{\small $\frac{8}{13}$}}}
\put(42,20){\makebox(0,0){{\small $\frac{11}{18}$}}}
\put(54,20){\makebox(0,0){{\small $\frac{13}{21}$}}}
\path(42,25)(48,35)(54,25)

\put(66,40){\makebox(0,0){{\small $\frac{7}{11}$}}}
\path(48,45)(60,55)(66,45)

\put(61.5,0){\makebox(0,0){{\small $\frac{12}{19}$}}}
\put(72,20){\makebox(0,0){{\small $\frac{9}{14}$}}}
\path(61.5,5)(66,35)(72,25)

\put(102,80){\makebox(0,0){{\small $\frac{3}{4}$}}}
\path(36,85)(78,95)(102,85)

\put(84,20){\makebox(0,0){{\small $\frac{7}{10}$}}}
\put(90,40){\makebox(0,0){{\small $\frac{5}{7}$}}}
\put(96,20){\makebox(0,0){{\small $\frac{8}{11}$}}}
\path(84,25)(90,35)(96,25)

\put(126,60){\makebox(0,0){{\small $\frac{4}{5}$}}}
\path(90,45)(102,75)(126,65)

\put(114,40){\makebox(0,0){{\small $\frac{7}{9}$}}}
\put(132,40){\makebox(0,0){{\small $\frac{5}{6}$}}}
\path(114,45)(126,55)(132,45)

\put(108,20){\makebox(0,0){{\small $\frac{10}{13}$}}}
\put(120,20){\makebox(0,0){{\small $\frac{11}{14}$}}}
\path(108,25)(114,35)(120,25)

\put(127.5,0){\makebox(0,0){{\small $\frac{9}{11}$}}}
\put(138,20){\makebox(0,0){{\small $\frac{6}{7}$}}}
\path(127.5,5)(132,35)(138,25)

\put(-130,120){\makebox(0,0){{\small $\XX_1$}}}
\put(-130,100){\makebox(0,0){{\small $\XX_2$}}}
\put(-130,80){\makebox(0,0){{\small $\XX_3$}}}
\put(-130,60){\makebox(0,0){{\small $\XX_4$}}}
\put(-130,40){\makebox(0,0){{\small $\XX_5$}}}
\put(-130,20){\makebox(0,0){{\small $\XX_6$}}}

\end{picture}
\end{center}
\caption{Zhabitskaya's odd Farey tree} \label{Figure6}
\end{figure}

\begin{proposition} \label{Prop8}
There exists a universal constant $C$ such that if $x$ and $y$ are adjacent elements of $\mathcal{Y}_n$, then
$$|Q_O(x) -Q_O(y)| \leq C \lambda^{-n}. $$
\end{proposition}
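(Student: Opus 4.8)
The plan is to isolate the part of the OCF expansions of $x$ and $y$ on which they agree, exploit the multiplicativity hidden in \eqref{eq6}, and then use the combinatorics of the tree $\mathcal{D}$ to see that this common part already consumes almost all of the budget $n+1$ on partial-quotient sums.

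Set up the reduction first. Let $w=((f_1,b_1),\dots,(f_j,b_j))$ be the maximal common initial segment of the OCF expansions of $x$ and $y$ (possibly empty), put $S=b_1+\cdots+b_j$ and $\varepsilon=(-f_1)\cdots(-f_j)\in\{\pm1\}$, and write $x=[(w),\sigma_x]$, $y=[(w),\sigma_y]$; by maximality the tails $\sigma_x,\sigma_y$ are either empty or begin with distinct blocks. Substituting into \eqref{eq6} and factoring out the common head gives
\[
Q_O(x)-Q_O(y)=\frac{\varepsilon}{\lambda^{S}}\bigl(\widetilde{Q}_O(\sigma_x)-\widetilde{Q}_O(\sigma_y)\bigr),
\]
where $\widetilde{Q}_O$ is the (finite) series of \eqref{eq6} evaluated on a tail regarded as a possibly sign-led OCF word, with $\widetilde{Q}_O(\varnothing)=0$. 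Since every partial sum of partial quotients of a tail $((g_1,c_1),(g_2,c_2),\dots)$ is at least $c_1+l-1$, a geometric series estimate gives $|\widetilde{Q}_O(\sigma)|\le\lambda^{2-c_1}/(\lambda-1)$ for a nonempty tail with first partial quotient $c_1$, whence
\[
|Q_O(x)-Q_O(y)|\le\frac{\lambda^{2}}{\lambda-1}\Bigl(\lambda^{-(S+c_1^x)}+\lambda^{-(S+c_1^y)}\Bigr),
\]
each term present only when the corresponding tail is nonempty. Here $S+c_1^x$ is the sum of the first $j+1$ partial quotients of $x$, hence $\le n+1$, and likewise for $y$.

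It therefore remains to establish the complementary bound: if $x,y$ are adjacent in $\mathcal{Y}_n$ and $\sigma_x\ne\varnothing$, then $S+c_1^x\ge n$ (symmetrically for $y$); the two displays then yield $|Q_O(x)-Q_O(y)|\le\frac{2\lambda^{2}}{\lambda-1}\lambda^{-n}$, which is the claim. I would prove this by contradiction, exhibiting a rational of partial-quotient sum at most $n+1$ strictly between $x$ and $y$. If $\sigma_x,\sigma_y$ begin with blocks of opposite sign, then $x$ and $y$ lie on opposite sides of the rational $[(w)]$ within the cylinder of $w$, and $[(w)]$ — or, if $w$ ends in $(-1,1)$, its admissible rewriting, of partial-quotient sum $S-2$ — has sum $\le S\le n$ and lies strictly between them, so this configuration cannot occur at all. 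If the tails begin with the same sign $g$ but distinct odd digits $c<c'$, then either $[(w),(g,c+2)]$ (if $c+2<c'$) or the cylinder-boundary rational $[(w),(g,c),(1,1)]=[(w),(g,c+1)]$ (if $c'=c+2$) lies strictly between $x$ and $y$, of sum $S+c+2$ respectively $S+c+1$, forcing $S+c\ge n$ and hence $S+c_1^x,S+c_1^y\ge n$. Finally, if $\sigma_x=\varnothing$, so $x=[(w)]$, the branching rule of $\mathcal{D}$ [as in the construction preceding Proposition~\ref{eq8}, cf.\ \cite{Zha}] shows that a neighbour of a node in $\mathcal{Y}_n$ is obtained by appending at most one block (together with a possible trailing $(1,1)$), which gives $S+c_1^y\ge n$; a deeper descendant of $x$ would leave room for an intermediate node, again contradicting adjacency. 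The pairs involving the endpoints $0$ and $1$ are handled the same way, or directly from $Q_O(\tfrac{1}{2k-1})=\lambda^{-2k+2}$ and $Q_O(\tfrac{1}{2k})=\lambda^{-2k+2}-\lambda^{-2k+1}$.

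I expect the only genuine work to be this last case analysis, i.e.\ producing the intermediate rational in each configuration — in particular dealing with $w$ ending in $(-1,1)$, verifying that the exhibited OCF words obey the admissibility constraints $a_i+e_{i+1}>0$ and the terminal-digit convention, and handling the sub-case $c'=c+2$ when a tail is itself a trailing $(1,1)$, where the naive intermediate point can coincide with $x$ or $y$ and one must descend one further level in the cylinder (harmlessly weakening $n$ to $n-O(1)$, which only changes the final constant). The reduction of the first two paragraphs and the concluding estimate are, by contrast, routine manipulations of the series \eqref{eq6}.
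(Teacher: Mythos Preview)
Your reduction in the first paragraph is fine, but the core combinatorial claim --- that for adjacent $x,y\in\mathcal{Y}_n$ one has $S+c_1^x\ge n-O(1)$ --- is false, and the triangle-inequality bound on the tails cannot be rescued by ``descending one further level''. Take $x=\tfrac12=[(1,1),(1,1)]$ and let $y$ be its left neighbour in $\mathcal{Y}_n$ for large even $n$; the tree $\mathcal{D}$ gives $y=[(1,3),(-1,1),(1,2k+1)]$ with $2k+5=n+1$. The maximal common prefix is empty, so $S=0$, $c_1^x=1$, $c_1^y=3$: your displayed bound is then of fixed order $\lambda^{-1}$ independently of $n$, while the true difference is
\[
Q_O(x)-Q_O(y)=(1-\lambda^{-1})-(\lambda^{-2}+\lambda^{-3}-\lambda^{-(2k+4)})=\lambda^{-n},
\]
using $1-\lambda^{-1}-\lambda^{-2}-\lambda^{-3}=0$. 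The deficit $n-(S+c_1^x)=n-1$ is unbounded, not $O(1)$; no intermediate rational exists (they really are adjacent), so the proof by contradiction cannot close.

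The point your argument misses is that when the last block of $x$ is $(1,1)$, the neighbour $y$ does not extend $x$ but rather \emph{modifies} its penultimate digit, and the resulting large leading terms in $\widetilde{Q}_O(\sigma_x)$ and $\widetilde{Q}_O(\sigma_y)$ cancel exactly because of the defining relation $\lambda^3=\lambda^2+\lambda+1$. The paper's proof handles this by classifying the possible forms of $y$ via the three ``moves'' in $\mathcal{D}$ and, in precisely this case, computing the cancellation explicitly. Any correct proof must invoke this algebraic identity somewhere; your separate bounds on $|\widetilde{Q}_O(\sigma_x)|$ and $|\widetilde{Q}_O(\sigma_y)|$ throw it away.
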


\begin{proof}First, suppose that $y \in \mathcal{X}_n$. We have already noted in the proof of Proposition \ref{eq8} that no two elements of $\mathcal{X}_{n}$ are adjacent in $\mathcal{Y}_n$,
so it must be the case that $y \in \mathcal{X}_n$ is a descendant of $x$, in the sense that it is obtained from $x$ by (perhaps repeatedly) taking mediants.
Suppose $x = [(e_1, a_1), (e_2, a_2), \ldots, (e_j, a_j)]$. There are three possible ``moves'' in the tree $\mathcal{D}$, each corresponding to a possible
relationship between an element $x \in \mathcal{X}_k$ and its descendant in $\mathcal{X}_{k+1}$ or $\mathcal{X}_{k+2}$. The first type of move is appending
$(1,1)$ to the tail of the continued fraction of $x$. The second (possible only when $a_j >1$) is appending $(-1,1), (1,1)$ to the tail, and the third
(possible only when $a_j =1$) is to remove $(e_j, a_j) = (1,1)$ and replace $(e_{j-1}, a_{j-1})$ with $(e_{j-1}, a_{j-1}+2)$. Suppose we call a move
(of any of the three types) a left move if the result is less than the input, and a right move if the result is greater than the input. Not only is $y$
obtained from $x$ by a series of these moves, but since $y$ is adjacent to $x$, it must be obtained either by a right move followed by only left moves,
or a left move followed by only right moves. Note that moves of the first type will be right moves if and only if
$(-e_1)\cdots (-e_j) =1$, and hence moves of the
second or third types are left moves in this case. Note also that each move has the end result of switching the sign of the product of the $-e_i$.
We now consider three cases:

\emph{Case 1.} If the first move is of the first type, then the second move must be as well, in order to switch direction. Subsequent moves must all
have the same direction as the second, so they must alternate between type three moves (since the type one moves leave $(1,1)$ as the last term) and
type one moves. In this case, the continued fraction of $y$ is of the form $[(e_1, a_1), (e_2, a_2), \ldots , (e_j, a_j), (1, 1 + 2k)]$ or
$[(e_1, a_1), (e_2, a_2), \ldots , (e_j, a_j), (1, 1 + 2k), (1,1)]$.

\emph{Case 2.} If the first move is of second type, then the second move must be of third type, after which it must alternate between first
type and third type. Hence the continued fraction of $y$ is of the form
$[(e_1, a_1), (e_2, a_2), \ldots , (e_j, a_j), (-1, 1 + 2k)]$ or $[(e_1, a_1), (e_2, a_2), \ldots , (e_j, a_j), (-1, 1 + 2k), (1,1)]$.

\emph{Case 3.} If the first move is of third type, then the second move must be of second type, after which it must alternate between first
type and third type. Hence the continued fraction of $y$ is of the form $[(e_1, a_1), (e_2, a_2), \ldots , (e_{j-1}, a_{j-1}+2), (-1, 1), (1, 1 + 2k)]$ or
$[(e_1, a_1), (e_2, a_2), \ldots , (e_{j-1}, a_{j-1}+2), (-1, 1), (1, 1 + 2k), (1,1)]$. Note that in this case, we must have $a_j =1$.

In any case, what we need is the inequality $|Q_O(x) - Q_O(y)| \leq C\lambda^{-(a_1+\cdots+a_j)} \lambda^{-2k -1}$, and its consequence that since
$y \in \mathcal{X}_n$, then $$|Q_O(x) -Q_O(y)| \leq C \lambda^{-n}.$$

For the first two cases, this is an immediate consequence of the finite sum version of formula \eqref{eq6} for $Q_O$ and the possible continued fractions for $y$.
In these cases, the first $j$ terms of the continued fraction for $y$ coincide with those of $x$, causing the first $j$ terms of $Q_O(x)$ and $Q_O(y)$ to cancel,
leaving only one or two terms of order $\lambda^{-n}$. In the third case, we note that
\begin{equation*}
|Q_O(x) - Q_O(y)|= \lambda^{-(a_1+\cdots+a_{j-1})+1}|(1 -\lambda^{-1}) -(\lambda^{-2} + \lambda^{-3} - \lambda^{-4 -2k} + \lambda^{-4 -2k-1} )|
\end{equation*}
From the definition of $\lambda$ we have $1 - \lambda^{-1} - \lambda^{-2} -\lambda^{-3} =0$, so
\begin{equation*}
|Q_O(x) - Q_O(y)| \leq 2 \lambda^{-2k-2} \lambda^{-(a_1+\cdots+a_{j-1})+1} = (2\lambda)  \lambda^{-(a_1+\cdots+a_j)} \lambda^{-2k-1}.
\end{equation*}
Essentially, what was used in the third case is that $Q_O(x)$ does not depend on the representation of $x$. Although we have adopted a convention
that if the last $a_j =1$ then we require $e_j =1$, the formula for $Q_O$ gives the same results for $[(e_1, a_1), (e_2, a_2), \ldots, (e_{j-1}, a_{j-1}), (1, 1)]$
and the equivalent $[(e_1, a_1), (e_2, a_2), \ldots, (e_{j-1}, a_{j-1}+2), (-1, 1)]$, as a consequence of the definition of $\lambda$.

Finally, by increasing the constant $C$ by a factor of $\lambda$, we may remove our initial assumption that $y \in \mathcal{X}_n$, since given any two
adjacent elements of $\mathcal{Y}_n$, at least one of them must be in $\mathcal{X}_n$ or $\mathcal{X}_{n-1}$.
\end{proof}

We are now ready to prove Theorem $\ref{oddcont}$, in much the same manner as Theorem $\ref{evencont}$.

\begin{proof}[Proof of Theorem \ref{oddcont}]
Suppose $x < x'$ in $[0,1]$. Let $y = Q_O(x)$ and $y' = Q_O(x')$. Let $k$ be the least integer such that we have
$x \leq r \leq r' \leq x'$ for some $r, r' \in \mathcal{Y}_k$. The bound from Proposition $\ref{eq8}$ gives $x' -x \geq r' -r \geq {G^{-2k-4}}$ since $r$ and $r'$ have denominator at most $G^{k+2}$.
Since we have taken $k$ to be the least possible, there are at most $3$ elements of $\mathcal{Y}_k$ in the interval $[x, x']$, so $y' - y \leq 5 C \lambda^{-k}$.
Therefore
\begin{equation*}
y' -y \leq 5C\lambda^2(x'-x)^{\frac{\log \lambda}{2 \log G}}.
\end{equation*}

To see that this is best possible, consider $x = G -1 = [(1,1), (1,1), (1,1),\ldots]$ and its convergents. If
$x_n$ denotes the $n$th convergent of $x$, then $|x - x_n|$ is of the order $G^{-2n}$. On the other hand,
\begin{equation*}
|Q_O(x) - Q_O(x_n)| = \bigg| \sum_{k=n}^{\infty} (-\lambda)^{1-k} \bigg|
\end{equation*}
is of order $\lambda^{-n}$.
Since $|x - x_n|^{\frac{\log \lambda}{2 \log G}}$ is of order $(G^{-2n})^{\frac{\log \lambda}{2 \log G}} = \lambda^{-n}$,
we conclude that this is the best possible exponent.
\end{proof}

\begin{cor}
The map $Q_O$ is strictly increasing on $[0,1]$.
\end{cor}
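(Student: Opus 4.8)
The plan is to combine the continuity of $Q_O$ (Theorem~\ref{oddcont}) with strict monotonicity of $Q_O$ on the dense set $\mathbb{Q}\cap[0,1]$, via the elementary observation that a function which is continuous on $[0,1]$ and strictly increasing on a dense subset $D$ is strictly increasing on all of $[0,1]$: given $a<b$, choose $d<d'$ in $D\cap(a,b)$; then $Q_O(a)=\lim_{D\ni e\to a^{+}}Q_O(e)\le Q_O(d)$ and $Q_O(d')\le Q_O(b)$ by monotonicity on $D$ together with continuity, while $Q_O(d)<Q_O(d')$, so $Q_O(a)<Q_O(b)$.

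It therefore remains to see that $Q_O$ is strictly increasing on $\mathbb{Q}\cap[0,1]$. Since every pair of rationals in $[0,1]$ lies in a common $\mathcal{Y}_n$, and since $\mathcal{Y}_n$ is finite and totally ordered, it is enough to show that for every $n$ and every pair $u<v$ of \emph{adjacent} elements of $\mathcal{Y}_n$ one has $Q_O(u)<Q_O(v)$; strict monotonicity on each $\mathcal{Y}_n$, and hence on $\mathbb{Q}\cap[0,1]$, then follows by chaining these inequalities along consecutive elements. But this is already contained in the proof of Proposition~\ref{Prop8}: there one of $u,v$ is a tree-descendant of the other in $\mathcal{D}$, obtained from it by one of the explicit chains of the three ``moves'' (a right move followed by left moves, or a left move followed by right moves), and in each of Cases~1--3 the difference $Q_O(v)-Q_O(u)$ is computed explicitly from the finite-sum form of~\eqref{eq6}. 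Inspecting the \emph{sign} of that computation (rather than only its absolute value), and using $1-\lambda^{-1}>0$ in Cases~1 and~2 and $1-\lambda^{-1}-\lambda^{-2}-\lambda^{-3}=0$, i.e.\ $\lambda-1-\lambda^{-1}=\lambda^{-2}>0$, in Case~3, one sees that $Q_O(v)-Q_O(u)$ is strictly positive, matching $v>u$. (Equivalently, this monotonicity is implicit in Zhabitskaya's study of $F^{0}$ in~\cite{Zha} and parallels the ordering of $\mathcal{D}_E$ discussed in Section~\ref{ECF}.)

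The only real point requiring care is this sign check for adjacent pairs: because the odd Stern--Brocot tree $\mathcal{D}$ is irregular --- descendants sit one or two levels below their parents, and the three move types contribute terms of different sizes, so $Q_O(\mathcal{Y}_n)$ is not uniformly spaced, in contrast to $Q_E(\mathcal{Y}_k)=\{m/3^k\}$ --- one must run through Cases~1--3 and verify the orientation in each; but the magnitudes have already been worked out in the proof of Proposition~\ref{Prop8}, so this adds essentially nothing new. Once strict monotonicity on $\mathbb{Q}\cap[0,1]$ is in hand, the continuity argument of the first paragraph completes the proof. (As an aside, once the linearization $Q_O F_O Q_O^{-1}=\overline{F}_O$ with $\overline{F}_O$ expanding is available, one can alternatively argue that if $Q_O$ were constant on an interval $[a,b]$ it would be constant on each $F_O^{\,k}([a,b])$, hence on $[0,1]$, contradicting $Q_O(0)=0\neq1=Q_O(1)$; but that invokes a result proved later.)
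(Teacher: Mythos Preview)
Your argument is correct, and it takes a somewhat different route than the paper. The paper's proof first uses Proposition~\ref{Prop8} to conclude that $Q_O(\mathbb{Q}\cap[0,1])$ is dense in $[0,1]$, then appeals to the fact that $Q_O$ is non-decreasing ``by its very definition'' (meaning: as the pointwise limit of the non-decreasing step functions $Q_n$ from the Introduction, equivalently Zhabitskaya's $F^0$), and finally invokes continuity to conclude strict monotonicity. You instead bypass the limit-of-$Q_n$ description of $Q_O$ altogether and establish strict monotonicity directly on $\mathbb{Q}\cap[0,1]$, by tracking the \emph{sign} (not just the magnitude) of the differences worked out in Cases~1--3 of Proposition~\ref{Prop8}; the passage to all of $[0,1]$ via continuity is then the routine argument you give.

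Your version is arguably the more careful of the two. Taken literally, the implication ``continuous $+$ non-decreasing $+$ dense image $\Rightarrow$ strictly increasing'' that the paper appears to invoke is false in general (the Cantor function is continuous, non-decreasing, and surjective onto $[0,1]$, yet constant on intervals), so the paper's last step is at best elliptical: one must also use that the adjacent differences in Proposition~\ref{Prop8} are strictly nonzero, which is implicit in that proof but not stated. You make exactly this explicit. The cost is the small amount of extra bookkeeping to verify the orientation of $Q_O(v)-Q_O(u)$ against $v-u$ in each of the three cases (your use of $1-\lambda^{-1}>0$ and of $1-\lambda^{-1}-\lambda^{-2}-\lambda^{-3}=0$ is precisely what is needed); the benefit is a self-contained argument from formula~\eqref{eq6} alone, without recourse to the alternative counting definition of $Q_O$.
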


\begin{proof}
It follows from Proposition \ref{Prop8} and
$\Q \cap [0,1]=\bigcup_{n\geq 1} \YY_n$ that $Q_O (\Q \cap [0,1])$ is dense in $[0,1]$.
Since $Q_O$ is continuous and it is non-decreasing by its very definition,
it follows that $Q_O$ is strictly increasing.
\end{proof}

\subsection{The linearization of the map $F_O$}

Consider the piecewise linear maps $\overline{F}_O, \overline{T}_O :[0,1]\rightarrow [0,1]$ defined by
\begin{equation*}
\begin{split}
\overline{F}_O (y) & =\begin{cases} \lambda^2 y & \mbox{\rm if $y\in [0,\lambda^{-2}]$} \\
\lambda(\lambda^2 y-1) & \mbox{\rm if $y\in [\lambda^{-2},1-\lambda^{-1} ]$} \\
\lambda (1-y) & \mbox{\rm if $y\in [1-\lambda^{-1},1],$}\end{cases}  \\
\overline{T}_O (y) & =\begin{cases} \lambda-\lambda^{2k-1} y & \mbox{\rm if $y\in (\frac{\lambda-1}{\lambda^{2k-1}},\frac{1}{\lambda^{2k-2}}),
\ k\geq 1$} \\
\lambda^{2k-1} y-\lambda  & \mbox{\rm if $y\in (\frac{1}{\lambda^{2k-2}},\frac{\lambda-1}{\lambda^{2k+1}}),\  k\geq 2.$}
\end{cases}
\end{split}
\end{equation*}

\begin{proposition}
The homeomorphism $Q_O$ of $[0,1]$ linearizes the maps $F_O$ and $T_O$ as follows:
\begin{equation*}
\mbox{\rm (i)} \   Q_O F_O Q_O^{-1} = \overline{F}_O, \qquad
\mbox{\rm (ii)} \  Q_O T_O Q_O^{-1} = \overline{T}_O.
\end{equation*}
\end{proposition}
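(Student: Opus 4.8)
The plan is to argue exactly as for Proposition \ref{prop5}: verify both identities directly from the series \eqref{eq6} for $Q_O$ together with the symbolic actions of $F_O$ and $T_O$ on OCF expansions, using the strict monotonicity of $Q_O$ (the corollary above) to match cylinder sets with the linearity intervals. It suffices to prove the identities on $\Q\cap(0,1)$, since $Q_O$, $F_O$, $T_O$ are continuous and $Q_O(\Q\cap(0,1))$ is dense in $[0,1]$.

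For (i), write $x=[(1,a_1),(e_2,a_2),(e_3,a_3),\ldots]$ and split according to the three branches of $F_O$. If $a_1\ge 3$ and $(a_1,e_2)\neq(3,-1)$ --- which, since $Q_O(0)=0$, $Q_O(\frac13)=\lambda^{-2}$ and $Q_O$ is increasing, is exactly the condition $x\in[0,\frac13)$ --- then $F_O(x)=[(1,a_1-2),(e_2,a_2),\ldots]$, and term by term \eqref{eq6} gives $Q_O(F_O(x))=\lambda^2Q_O(x)$, the first branch of $\overline{F}_O$ on $Q_O([0,\frac13])=[0,\lambda^{-2}]$. If $(a_1,e_2)=(3,-1)$ then $x\in[\frac13,\frac12]$ and $F_O(x)=[(1,a_2),(e_3,a_3),\ldots]$; here the first term of \eqref{eq6} for $Q_O(x)$ is $\lambda^{-2}$ and the remaining terms are $\lambda^{-3}$ times those of $Q_O(F_O(x))$, whence $Q_O(F_O(x))=\lambda^3Q_O(x)-\lambda=\lambda(\lambda^2Q_O(x)-1)$, the middle branch on $[\lambda^{-2},1-\lambda^{-1}]$. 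If $a_1=1$ --- so that $e_2=1$ is forced by the admissibility relation $a_1+e_2>0$ --- then $x\in[\frac12,1]$, $F_O(x)=[(1,a_2),(e_3,a_3),\ldots]$, the leading term of $Q_O(x)$ is $1$, and $1-Q_O(x)=\lambda^{-1}Q_O(F_O(x))$, i.e. $Q_O(F_O(x))=\lambda(1-Q_O(x))$, the last branch on $[1-\lambda^{-1},1]$. The constants here are forced purely by the exponents in \eqref{eq6}; checking that the three pieces of $\overline{F}_O$ fit together at the breakpoints $\lambda^{-2}$ and $1-\lambda^{-1}$ uses the defining relation $\lambda^3=\lambda^2+\lambda+1$ (equivalently $\lambda^2-\lambda-1=\lambda^{-1}$), while the endpoint values $Q_O(\frac13)=\lambda^{-2}$, $Q_O(\frac12)=1-\lambda^{-1}$, $Q_O(1)=1$ confirm that $Q_O$ carries the $F_O$-partition of $[0,1]$ onto the $\overline{F}_O$-partition, so that the three cases are exhaustive.

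For (ii) one can repeat the computation, now indexed by the leading digit: $x\in(\frac{1}{2k+1},\frac{1}{2k})$ corresponds to $a_1=2k+1$, $e_2=-1$, and $x\in(\frac{1}{2k},\frac{1}{2k-1})$ to $a_1=2k-1$, $e_2=1$; in both cases $T_O(x)=[(1,a_2),(e_3,a_3),\ldots]$, the leading block of $Q_O(x)$ cancels against a rescaled copy of $Q_O(T_O(x))$, and one obtains $Q_O(T_O(x))=\lambda^{2k+1}Q_O(x)-\lambda$, respectively $Q_O(T_O(x))=\lambda-\lambda^{2k-1}Q_O(x)$, on the images $Q_O((\frac{1}{2k+1},\frac{1}{2k}))=(\lambda^{-2k},\frac{\lambda-1}{\lambda^{2k-1}})$ and $Q_O((\frac{1}{2k},\frac{1}{2k-1}))=(\frac{\lambda-1}{\lambda^{2k-1}},\frac{1}{\lambda^{2k-2}})$. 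Alternatively, as suggested by the referee's remark after Proposition \ref{prop5}, (ii) follows from (i) by a dynamical argument: $T_O$ is the jump transformation of $F_O$ associated with the first visit to $[\frac13,1]$ (concretely $T_O=F_O^{\,k}$ on $(\frac{1}{2k+1},\frac{1}{2k-1})$), $\overline{T}_O$ is the jump transformation of $\overline{F}_O$ associated with the first visit to $[\lambda^{-2},1]$ (a short computation with the piecewise-linear $\overline{F}_O$), and by (i) the homeomorphism $Q_O$ conjugates $F_O$ to $\overline{F}_O$ and maps $[0,\frac13)$ onto $[0,\lambda^{-2})$; hence it conjugates the two jump transformations, which is exactly (ii).

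There is no essential obstacle here: the argument is the bookkeeping analogue of Proposition \ref{prop5}. The only delicate points are using the admissibility relation $a_1+e_2>0$ together with the sign $e_2$ to assign the borderline cylinder $a_1=3$ correctly (to $[0,\frac13)$ when $e_2=1$, to $[\frac13,\frac12]$ when $e_2=-1$), tracking the running sign $(-e_1)\cdots(-e_k)$ when subtracting the two series, and the fact that a rational has both a finite and an infinite OCF expansion --- harmless because, as noted in the proof of Proposition \ref{Prop8}, formula \eqref{eq6} does not depend on the chosen representation.
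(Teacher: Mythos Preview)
Your proposal is correct and follows essentially the same approach as the paper's own proof: both verify (i) by a three-branch case split on $(a_1,e_2)$ using the series formula \eqref{eq6}, and both handle (ii) either by the analogous two-branch computation indexed by $a_1$ or by the jump-transformation/first-return dynamical argument. Your write-up is slightly more explicit about why the cylinder conditions on $(a_1,e_2)$ match the metric partition via monotonicity of $Q_O$, and you index the two cases in (ii) with a harmless shift relative to the paper, but the substance is identical.
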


\begin{proof}
Let $x=[(1,a_1),(e_1,a_2),(e_2,a_3),\ldots ]\in (0,1)$ and employ formula \eqref{eq6}.

(i) There are three cases to be considered:

\emph{Case 1.} $x\in (0,\frac{1}{3})$, where $(a_1,e_1)\notin \{ (3,-1),(1,1)\}$. Then we successively infer
\begin{equation*}
\begin{split}
Q_O (x) & =\frac{1}{\lambda^{a_1-1}} -\frac{e_1}{\lambda^{a_1+a_2-1}} +\frac{e_1e_2}{\lambda^{a_1+a_2+a_3-1}} -\cdots \in (0,\lambda^{-2}), \\
(\overline{F}_O Q_O)(x)  & =\lambda^2 Q_O (x)
= \frac{1}{\lambda^{a_1-3}}-\frac{e_1}{\lambda^{a_1+a_2-3}} +\frac{e_1e_2}{\lambda^{a_1+a_2+a_3-3}} -\cdots \\
& = Q_O ( [(1,a_1-2),(e_1,a_2),(e_2,a_3),\ldots ])
 = (Q_O F_O)(x) .
\end{split}
\end{equation*}

\emph{Case 2.} $x\in (\frac{1}{3},\frac{1}{2})$, so $a_1=3$, $e_1=-1$ and we have
\begin{equation*}
\begin{split}
Q_O (x) & = \frac{1}{\lambda^2} +\frac{1}{\lambda^{a_2+2}} -\frac{e_2}{\lambda^{a_2+a_3+2}}
+\frac{e_2 e_3}{\lambda^{a_2+a_3+a_4+2}} - \cdots \in (\lambda^{-2},1-\lambda^{-1}) , \\
(\overline{F}_O Q_O)(x) &  =\lambda (\lambda^2 Q_O(x)-1) =\frac{1}{\lambda^{a_2-1}}-\frac{e_2}{\lambda^{a_2+a_3-1}}
+\frac{e_2e_3}{\lambda^{a_2+a_3+a_4-1}} -\cdots \\
& = Q_Q ( [(1,a_2),(e_2,a_3),(e_3,a_4),\ldots ]) =(Q_O F_O)(x).
\end{split}
\end{equation*}

\emph{Case 3.} $x\in (\frac{1}{2},1)$, so $a_1=e_1=1$ and we have
\begin{equation*}
\begin{split}
Q_O(x) & =1-\frac{1}{\lambda^{a_2}} +\frac{e_2}{\lambda^{a_2+a_3}} -\frac{e_2e_3}{\lambda^{a_2+a_3+a_4}} + \cdots
\in (1-\lambda^{-1},1) , \\
(\overline{F}_O Q_O)(x) & =\lambda \big( 1-Q_O(x)\big) =\frac{1}{\lambda^{a_2-1}}-\frac{e_2}{\lambda^{a_2+a_3-1}} +
\frac{e_2e_3}{\lambda^{a_2+a_3+a_4-1}} -\cdots  =(Q_O F_O)(x).
\end{split}
\end{equation*}

(ii) According to formula \eqref{eq6} we have
\begin{equation*}
(Q_O T_O)(x) =\frac{1}{\lambda^{a_2-1}} -\frac{e_2}{\lambda^{a_2+a_3-1}} +\frac{e_2e_3}{\lambda^{a_2+a_3+a_4-1}} - \cdots .
\end{equation*}
Note also that for every $k\in \N$ we have
\begin{equation*}
Q_O \bigg( \frac{1}{2k-1}\bigg) =\frac{1}{\lambda^{2k-2}} \quad \mbox{\rm and} \quad
Q_O \bigg( \frac{1}{2k} \bigg) =\frac{1}{\lambda^{2k-2}}-\frac{1}{\lambda^{2k-1}}
=\frac{\lambda-1}{\lambda^{2k-1}} .
\end{equation*}

Two situations can occur:

\emph{Case 1.} $x\in (\frac{1}{2k},\frac{1}{2k-1})$, $k\geq 1$, so $a_1=2k-1$, $e_1=1$ and we have
\begin{equation*}
\begin{split}
Q_O \bigg( \frac{1}{2k}\bigg) & =\frac{\lambda-1}{\lambda^{2k-1}}   < Q_O (x) < Q_O \bigg(\frac{1}{2k-1}\bigg) =\frac{1}{\lambda^{2k-2}} ,\\
(\overline{T}_O Q_O )(x) & =\lambda -\lambda^{2k-1} Q_O (x) \\
& = \lambda-\lambda^{2k-1} \bigg( \frac{1}{\lambda^{2k-2}} -\frac{1}{\lambda^{2k-2+a_2}}
+\frac{e_2}{\lambda^{2k-2+a_2+a_3}} - \cdots \bigg)
= (Q_O T_O)(x).
\end{split}
\end{equation*}

\emph{Case 2.} $x\in (\frac{1}{2k-1},\frac{1}{2k-2})$, $k\geq 2$, so $a_1=2k-1$, $e_1=-1$ and we have
\begin{equation*}
\begin{split}
Q_O \bigg( \frac{1}{2k-1}\bigg) & =\frac{1}{\lambda^{2k-2}}   < Q_O (x) < Q_O \bigg(\frac{1}{2k-2}\bigg) =\frac{\lambda-1}{\lambda^{2k-3}}
=\frac{\lambda+1}{\lambda^{2k-1}} ,\\
(\overline{T}_O Q_O )(x) & = \lambda^{2k-1} Q_O (x) -\lambda \\
& = \lambda^{2k-1} \bigg( \frac{1}{\lambda^{2k-2}} +\frac{1}{\lambda^{2k-2+a_2}}
-\frac{e_2}{\lambda^{2k-2+a_2+a_3}} + \cdots \bigg) -\lambda
= (Q_O T_O)(x).
\end{split}
\end{equation*}
Alternatively, part (ii) can be deduced from a dynamical argument as in Proposition \ref{prop5}.
\end{proof}

\section*{Acknowledgments}
We are grateful to the referees for their valuable input that
contributed to a number of clarifications and improved the presentation of the paper.

The first author would like to acknowledge partial support during his
visits to IMAR Bucharest by a grant from Romanian Ministry of
Research and Innovation, CNCS-UEFISCDI, project PN-III-P4-ID-PCE-2016-0823, within PNCDI III.

\end{document}